\documentclass[12pt]{amsart}

\usepackage{amssymb}
\usepackage{hyperref}
\usepackage[T1]{fontenc}
\usepackage{bbm}
\usepackage{graphicx}
\usepackage{latexsym}
\usepackage{enumerate}
\usepackage{mathrsfs}
\usepackage{epstopdf}
\usepackage{color}
\usepackage{geometry}
\newtheorem{theorem}{Theorem}[section]

\newtheorem{lemma}[theorem]{Lemma}

\theoremstyle{definition}
\newtheorem{definition}[theorem]{Definition}

\theoremstyle{remark}
\newtheorem{remark}[theorem]{Remark}

\newcommand\R{\mathbb{R}}
\newcommand\e{\varepsilon}

\numberwithin{equation}{section}

\makeatletter
\@namedef{subjclassname@2020}{\textup{2020} Mathematics Subject Classification}
\makeatother%%%%%%%%%%MSC2020

\begin{document}

\author{Tomasz Ga\l azka}
\address{Faculty of Mathematics, Informatics and Mechanics\\
 University of Warsaw\\
Banacha 2, 02-097 Warsaw\\
Poland}
\email{T.Galazka@mimuw.edu.pl}

\author{Yong Jiao}
\address{School of Mathematics and Statistics, Central South University, Changsha 410085,
People's Republic of China}
\email{jiaoyong@csu.edu.cn}

\author{Adam Os\k ekowski}
\address{Faculty of Mathematics, Informatics and Mechanics\\
 University of Warsaw\\
Banacha 2, 02-097 Warsaw\\
Poland}
\email{A.Osekowski@mimuw.edu.pl}

\author{Lian Wu}
\address{School of Mathematics and Statistics, Central South University, Changsha 410085,
People's Republic of China}
\email{wulian@csu.edu.cn}

\thanks{This paper was supported by the National Key R$\&$D Program of China (No.2023YFA1010800), the NSFC (Nos.12125109, W2411005, 12361131578), Narodowe Centrum Nauki  Poland  (grant 2023/48/Q/ST1/00013), the Provincial Natural Science Foundation of Hunan (No.2024JJ1010) and
the Science and Technology Innovation Program of Hunan Province (No.2024RC1016).}

\subjclass[2020]{Primary: 42B25. Secondary: 42B35, 46E30.}
\keywords{maximal operator; multilinear; weight; best constants.}

\title[Multilinear maximal operator]{Sharp weighted estimates for the multilinear maximal operator}

\begin{abstract}
The paper contains the study of the weighted $L^{p_1}\times L^{p_2}\times \ldots \times L^{p_m}\to L^p$ estimates for the multilinear maximal operator, in the context of abstract probability spaces equipped with a tree-like structure. Using the Bellman function method, we identify the associated optimal constants for weights satisfying the multilinear Muckenhoupt condition.
\end{abstract}

\maketitle

\section{Introduction}
The pioneering works of Calder\'on and Zygmund from the early 50's form the foundation for the large part of the contemporary harmonic analysis: the Calder\'on-Zygmund theory of singular integral operators. The initial motivation for the seminal paper \cite{CZ} came from connections 
 with potential theory and elliptic partial differential equations, as well as from questions about higher-dimension counterparts of the classical Hilbert transform. It was soon realized that the connections of the theory go much further and enable an efficient study of many problems in both real and complex analysis, operator theory, approximation theory and partial differential equations. Furthermore, the methods developed in the classical setting can be extended to various geometrical and multivariable contexts, including 
 homogeneous/non-homogeneous spaces, noncommutative theory and multilinear settings. We refer the interested reader to \cite{C,CM,DLMV,F, GT,MP,S,V} for more on the subject.
 
One of the basic questions in the Calder\'on-Zygmund theory concerns boundedness properties of these operators in various function spaces. This problem can be tackled with the use of various methods, one of which rests on the identification of appropriate maximal functions controlling the behavior of singular operators. This, in turn, leads to the question of efficient handling of such maximal functions, which in general is  more tractable and in many contexts allows for the identification of optimal, or at least tight constants involved. In this paper, we will be interested in the multilinear context \cite{CM,CM2,CJ,KS,GT0,GT1} and our primary goal is to establish a sharp weighted maximal $L^p$ bound arising in this setting. To proceed with the presentation, we need to introduce the necessary background and notation.

\smallskip

\emph{Linear case.} Suppose that $N$ is a fixed dimension and let $\mathscr{D}$ denote the standard dyadic lattice in $\R^N$. The dyadic maximal operator $M$ acts on locally integrable functions $f$ on $\R^N$  by the formula
$$ Mf=\sup_{Q\in \mathscr{D}} \langle |f|\rangle_Q\chi_Q,$$
where $\langle \psi\rangle_Q=\frac{1}{|Q|}\int_Q \psi$ is the average of $\psi$ over $Q$ with respect to the Lebesgue measure. This operator satisfies the $L^p$ bound
\begin{equation}\label{HLD}
 \|Mf\|_{L^p}\leq p'\|f\|_{L^p},\qquad 1<p\leq \infty,
\end{equation}
in which the constant $p'=p/(p-1)$ is the best possible. This estimate can be studied in the wider weighted context, where the word `weight' refers to a positive and locally integrable function on $\R^N$. Namely, any weight $w$ gives rise to the associated measure $w\mbox{d}x$ and the corresponding  weighted space $L^p(w)$. 
%$$ L^p(w)=\left\{f:\R^N\to \R\,:\,\|f\|_{L^p(w)}:=\left(\int_{\R^N}|f|^pw\mbox{d}x\right)^{1/p}<\infty\right\},\quad 0<p<\infty.$$
Now, there is an interesting problem of characterizing, for a fixed $1<p<\infty$, those weights $w$, for which $M$ acts boundedly on $L^p(w)$. The characterization was given by Muckenhoupt \cite{Mu} in the seventies: the boundedness holds if and only if $w$ belongs to the $A_p$ class, i.e., the $A_p$ characteristic of $w$, given by 
$$ [w]_{A_p}:=\sup_{Q\in \mathscr{D}} \langle w\rangle_Q \langle \sigma\rangle_Q^{p-1},$$
is finite. Here $\sigma=w^{1-p'}$ stands for the dual weight to $w$. The following quantitative version of this result was studied by Buckley in the nineties: for a given $1<p<\infty$, find the least exponent $\beta(p)$ such that
\begin{equation}\label{Buckley}
\|M\|_{L^p(w)\to L^p(w)}\leq \kappa_p [w]_{A_p}^{\beta(p)},
\end{equation}
where $\kappa_p$ depends only on $p$. That is, the problem is to identify the optimal dependence of $\|M\|_{L^p(w)\to L^p(w)}$ on the $A_p$ characteristic $[w]_{A_p}$. Buckley \cite{Bu} proved that the optimal exponent is equal to $\beta(p)=(p-1)^{-1}$, and this statement was further improved by the third-named author in \cite{Os}. To formulate the result, we need some additional notation. For given $1<p<\infty$ and $c\geq 1$, let $d=d(p,c)$ be the unique positive root of the equation
\begin{equation}\label{defd}
c(1+d)(p-1-d)^{p-1}=(p-1)^{p-1}.
\end{equation}
For the geometric interpretation of this parameter, see Figure \ref{interd} below.
\begin{figure}[htbp]
\begin{center}
\includegraphics[scale=0.4]{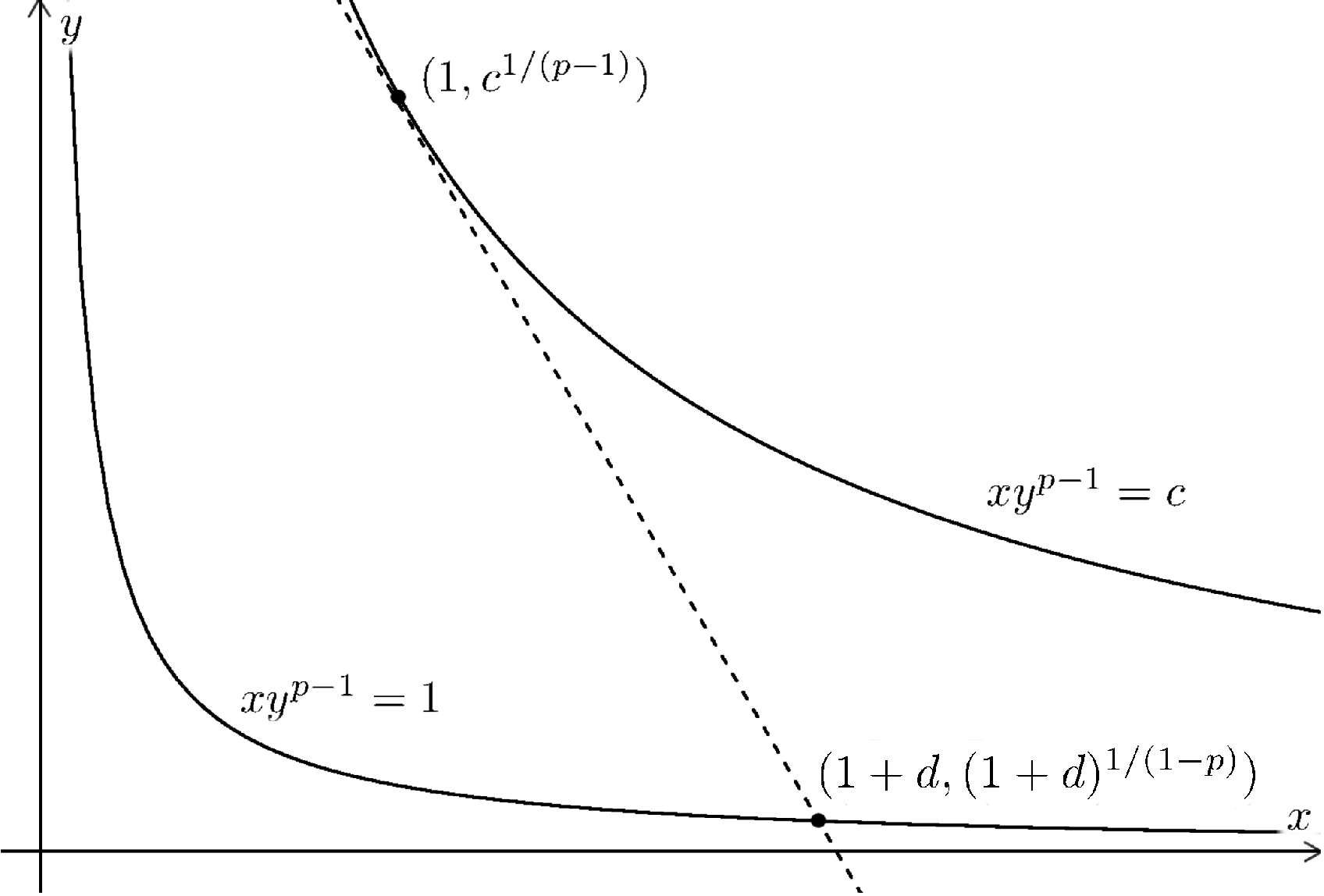}
\caption{The tangent to the curve $xy^{p-1}=c$ at the point $(1,c^{1/(p-1)})$ intersects the curve $xy^{p-1}=1$ at $(1+d,(1+d)^{1/(p-1)})$.}\label{interd}
\end{center}
\end{figure} 
Here is the full weighted extension of \eqref{HLD} (cf. \cite{Os}).

\begin{theorem}
For any $1<p<\infty$ we have the inequality
$$ \|M\|_{L^p(w)\to L^p(w)}\leq \frac{p}{p-1-d(p,[w]_{A_p})}.$$
The constant on the right is the best possible: for any $c\geq 1$ and $\e>0$ there is an $A_p$ weight $w$ satisfying $[w]_{A_p}\leq c$ such that
$$ \|M\|_{L^p(w)\to L^p(w)}\geq \frac{p}{p-1-d(p,c)}-\e.$$
\end{theorem}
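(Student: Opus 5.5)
The plan is to use the Bellman function method on the dyadic tree, as in \cite{Os}. Fix $c\geq 1$ and $d=d(p,c)$ from \eqref{defd}. I will work with a nonnegative $f$ and write the estimate in terms of the dual weight: put $f=h\sigma$. Then $\|f\|_{L^p(w)}^p=\int h^p\sigma$, and $\langle f\rangle_Q=\langle h\sigma\rangle_Q$. For each cube $Q$ the relevant variables are
\[
x=\langle h\sigma\rangle_Q,\quad y=\langle h^p\sigma\rangle_Q,\quad u=\langle w\rangle_Q,\quad v=\langle\sigma\rangle_Q,\quad z=\sup_{R\supseteq Q}\langle h\sigma\rangle_R .
\]
They range over the domain
\[
D_c=\{x^p\leq yv^{p-1},\ x\leq z,\ 1\leq uv^{p-1}\leq c\}.
\]
The first constraint is H\"older's inequality, and the last one is the $A_p$ condition together with Jensen's inequality.

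\textbf{Construction of the majorant.} I look for a function $B(x,y,u,v,z)$ with three properties:
\begin{itemize}
\item[(i)] $B(x,y,u,v,z)\geq z^pu$, or at least $B(x,y,u,v,x)\geq x^pu$, which is the value at the terminal state;
\item[(ii)] $B\leq C^py$ with $C=p/(p-1-d)$;
\item[(iii)] $B$ is zigzag concave on $D_c$: for every splitting of $Q$ into children with data $P_j$ averaging to $P$, we have $B(P)\geq\sum_j|Q_j|/|Q|\,B(P_j)$, where $z$ is updated to $\max(z,x_j)$, and $B$ is nonincreasing in $z$ in the appropriate sense.
\end{itemize}
Given such a $B$, the usual telescoping over the first $n$ generations applies to $f$ truncated to finitely many levels. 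It gives $\int (M_nf)^pw\leq C^p\int f^pw$, and letting $n\to\infty$ finishes the upper bound by monotone convergence.

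For the explicit form I would take the ansatz
\[
B=C^py-\beta\,z^{p-1}(\ldots)\ \ \text{homogeneous of appropriate degrees},
\]
for instance
\[
B=C^p y - K\,z^{p-1}x\,\Phi(uv^{p-1})+\dots,
\]
and tune the coefficients so that the concavity is critical along the tangent lines described in Figure~\ref{interd}. The parameter $d$ should appear precisely as the place where a chord from the boundary curve $uv^{p-1}=c$ meets $uv^{p-1}=1$: this is the extremal move allowed for the weight variables. I would first treat the reduced problem with $y$ eliminated by the $p$-homogeneity, and then verify concavity by computing the Hessian in $(x,y,u,v)$. The hard part is that $D_c$ is not convex in $(u,v)$. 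The standard fix is to show that concavity along segments inside $D_c$ plus the ``local'' property suffices, using the splitting lemma that every dyadic step can be decomposed into moves whose segments stay in a slightly enlarged domain $D_{c'}$. Then I let $c'\downarrow c$.

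\textbf{Sharpness.} I will produce extremal examples by running the Bellman dynamics backwards on $[0,1]$, the dyadic tree of intervals. The idea is to build $w$ with $[w]_{A_p}\leq c$ of self-similar power type, say $w(t)=t^{-\alpha}$-like on a geometric sequence of intervals $[0,2^{-k}]$. Its averages $(u,v)$ move along the tangent line from $(1,c^{1/(p-1)})$ toward the point $(1+d,(1+d)^{1/(p-1)})$. I take $f$ of the form $f=\sigma\cdot t^{-\gamma}$ on these intervals, with $\gamma$ just below the critical integrability exponent. Explicit computation then gives
\[
\|Mf\|_{L^p(w)}/\|f\|_{L^p(w)}\to p/(p-1-d).
\]
I would verify $[w]_{A_p}\leq c$ by checking only intervals of the form $[0,2^{-k}]$ and their siblings, using the choice of slope.

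\textbf{Main obstacle.} I expect the main obstacle to be verifying (iii), the concavity of $B$ on the nonconvex domain together with the jumps in $z$. This is where the exact value of $d$ in \eqref{defd} must enter, so the coefficients have to be fitted exactly there.
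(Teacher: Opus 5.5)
Your proposal has genuine gaps in both halves. For the upper bound, items (i)--(iii) are only the generic Bellman scheme. The whole content of the theorem is the existence of a $B$ that satisfies them with $C=p/(p-1-d)$, and ``take the ansatz $C^py-Kz^{p-1}x\Phi(uv^{p-1})+\dots$ and tune the coefficients'' does not supply one.

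Your cure for the non-convexity of $D_c$ also cannot work on a fixed tree. In a dyadic step there is no freedom in where to split: the children's points $P_\pm$ can be any points of $D_c$ whose midpoint lies in $D_c$. The segment $[P_-,P_+]$ can then leave $D_c$ by an amount that does not shrink. For instance, take $p=2$ with $uv=c$ at $P_-$ and at the midpoint. Along the segment $uv=c+A\theta(\theta-\tfrac12)$, where $A=\Delta u\,\Delta v$. Choosing $A=-2(c-1)$ puts $P_+$ on $uv=1$, while $uv=c+(c-1)/8$ at $\theta=1/4$, and this configuration is realized by dyadic weights with $[w]_{A_2}\le c$. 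Any chain of two-point moves from $P$ to $\{P_-,P_+\}$ stays on this segment and must cross $\theta=1/4$. So letting $c'\downarrow c$ is impossible, and you would at best get the constant for some $c'(c)>c$, which is not sharp.

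The paper (Theorem \ref{mainthm} with $m=1$, where $\alpha(\vec P,c)=d(p,c)$) avoids both problems. It proves only the testing bound $\int_R M(\sigma\chi_R)^pw\,d\mu\le C^p\sigma(R)$ with $C=\frac{p-1}{p-1-d}$. The sharp Carleson embedding (Theorem \ref{multilinear-carleson}, constant $p'$) then gives $p'C=\frac{p}{p-1-d}$. For the testing bound it uses the function
\[
B(s;t,u)=\frac{d+1}{d}\,t^pu+pC^p\Big(\frac{d+1}{dp'}-1\Big)s-\frac{p}{d}\,C^{p-1}t .
\]
For fixed $t$ this is affine in $(s,u)$ on the whole quadrant, so every splitting is harmless. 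The $A_p$ condition enters only at points actually visited, through Lemma \ref{propB2} and \eqref{init}, and the majorization is Lemma \ref{propB1}.

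For sharpness, your example as described does not reach $p/(p-1-d)$. Take your nested intervals $[0,2^{-k}]$, $w=t^d$ (its characteristic on each $[0,2^{-k}]$ is exactly $c$), and $f=t^{-\gamma}$ with $\gamma\uparrow(1+d)/p$. Since $f$ is decreasing, $Mf=\langle f\rangle_{[0,2^{-k}]}$ on $(2^{-k-1},2^{-k}]$. A direct computation then gives
\[
\frac{\|Mf\|_{L^p(w)}}{\|f\|_{L^p(w)}}\longrightarrow\frac{p}{p-1-d}\Big(\frac{1-2^{-(1+d)}}{(1+d)\log 2}\Big)^{1/p}<\frac{p}{p-1-d}.
\]
The ratio $1/2$ between consecutive nested sets causes a fixed discretization loss.

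The paper removes this loss by making the ratio $1-\delta$ with $\delta\to0$. Lemma \ref{Melas} produces sets $A_0\supset A_1\supset\dots$, each a union of tree elements, with $\mu(Q\cap A_n)=(1-\delta)^{n-k}\mu(Q)$ for every atom $Q$ of $A_k$. The functions $w$ and $f$ are constant on each $A_n\setminus A_{n+1}$. The weight uses an exponent $\tilde\alpha<d$, which is needed so that for small $\delta$ the segment $\mathcal{P}\mathcal{R}$ stays below the curve $uv^{p-1}=c$; this gives $[w]_{A_p}\le c$ on every tree element. An extra parameter $r<1$ makes $f\in L^p(w)$. Finally one lets $\delta\to0$, $r\to1$ and $\tilde\alpha\to d$.
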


To see that this is indeed an extension of \eqref{HLD}, note that the unweighted context corresponds to the choice $[w]_{A_p}=1$; since  $d(p,1)=0$, then the constant in the above $L^p$ bound is equal to $p'$. Our primary goal is to establish the multilinear analogue of the above theorem.

\smallskip

\emph{Multilinear case.} Suppose that $m\geq 1$ is a fixed integer. Then  the $m$-linear dyadic maximal operator $\mathcal{M}$ acts on vectors $\vec{f}=(f_1,f_2,\ldots,f_m)$ of locally integrable functions on $\R^N$ by
$$ \mathcal{M}\vec{f}=\sup_{Q\in \mathscr{D}} \prod_{j=1}^m \langle |f_j|\rangle_Q\chi_Q.$$
A direct combination of \eqref{HLD} and the H\"older inequality implies that if the vector $\vec{P}=(p_1,p_2,\ldots,p_m)$ of exponents satisfies $1<p_1,p_2,\ldots,p_m\leq \infty$ and $\frac{1}{p}=\frac{1}{p_1}+\frac{1}{p_2}+\ldots+\frac{1}{p_m},$ then we have the sharp bound
$$ \|\mathcal{M}\vec{f}\|_{L^p}\leq \prod_{j=1}^m p_j'\|f_j\|_{L^{p_j}}.$$
One can ask about the weighted version of this estimate, motivated by the above discussion for $M$. The precise answer was given by Lerner et. al. in \cite{LOPTT}. 
Given a vector $\vec{w}=(w_1,w_2,\ldots,w_m)$ of weights on $\R^N$, we set
$$ v_{\vec{w}}=\prod_{j=1}^m w_j^{p/p_j}$$
and say that $\vec{w}$ satisfies the dyadic multilinear condition $A_{\vec{P}}$, if
$$ [\vec{w}]_{A_{\vec{P}}}:=\sup_{Q\in \mathscr{D}} \langle v_{\vec{w}}\rangle_Q \prod_{j=1}^m \langle \sigma_j\rangle_Q^{p/p_j'}<\infty.$$
Here, as before, $\sigma_j=w_j^{1-p_j'}$ is the dual weight to $w_j$, $j=1,\,2,\,\ldots,\,m$. 
Note that for $m=1$, the above requirement reduces to the classical Muckenhoupt's condition $A_p$. Actually, the connection to the one-dimensional setting goes deeper: as proved in \cite{LOPTT}, we have $\vec{w}\in A_{\vec{P}}$ if and only if $\sigma_j\in A_{mp_j'}$ for all $j$ and $v_{\vec{w}}\in A_{mp}$. The multilinear $A_{\vec{P}}$ condition provides us with the answer to the above question: $\mathcal{M}$ is bounded as an operator from $L^{p_1}(w_1)\times L^{p_2}(w_2)\times \ldots \times L^{p_m}(w_m)$ to $L^p(v_{\vec{w}})$ if and only if $\vec{w}\in A_{\vec{P}}$ (cf. \cite{LOPTT}). What about the dependence of the norm on the characteristic of $\vec{w}$? In \cite{DLP}, the authors established the following version of \eqref{Buckley}:
\begin{equation}\label{multilinear}
 \|\mathcal{M}\vec{f}\|_{L^p(v_{\vec{w}})}\leq \kappa_{\vec{P}}[\vec{w}]_{A_{\vec{P}}}^{\beta(\vec{P})} \prod_{j=1}^m \|f_j\|_{L^{p_j}(w_j)},
\end{equation}
with $\beta(\vec{P})$ satisfying $ \frac{m}{mp-1}\leq \beta(\vec{P}) \leq \frac{1}{p}\left(1+\sum_{j=1}^m \frac{1}{p_j-1}\right)$. Furthermore, in the special case $p_1=p_2=\ldots=p_m=mp$, they showed that the estimate holds with $\beta(\vec{P})=m/(mp-1)=p_1'/p$, which is optimal. The question about the best exponent for an arbitrary vector $\vec{P}$ was answered in \cite{LKS}: the optimal choice is 
$ \beta(\vec{P})=\max\left\{{p_1'}/{p},{p_2'}/{p},\ldots,{p_m'}/{p}\right\}.$

The purpose of this paper is to present a different approach to the estimate \eqref{multilinear}, which will yield sharp  information about the size of the constant $\kappa_{\vec{P}}$. Actually, we will work in the context of probability spaces equipped with a tree-like structure. Here is the precise definition (cf. \cite{Me}).

\begin{definition}
Suppose that  $(X,\mu)$ is a nonatomic probability space. 
A set $\mathcal{T}$ of measurable subsets of $X$ will be called a tree if the following conditions are satisfied:
\begin{itemize}
\item[(i)] $X\in \mathcal{T}$ and for every $Q\in\mathcal{T}$ we have $\mu(Q)>0$.

\smallskip

\item[(ii)] For every $Q \in \mathcal{T}$ there is a  finite subset $C(Q) \subset \mathcal{T}$ containing at least two elements such that

(a) the elements of $C(Q)$ are pairwise disjoint subsets of $Q$,

(b) $ Q = \bigcup_{Q'\in C(Q)}Q'$.

\smallskip

\item[(iii)] $\mathcal{T} = \bigcup_{n\geq 0} \mathcal{T}^n$, where $\mathcal{T}^0=\{X\}$ and $\mathcal T^{n+1} = \bigcup_{Q\in\mathcal{T}^n} C(Q)$.

\smallskip

\item[(iv)] We have $\lim_{n\to\infty}\sup_{Q\in\mathcal{T}^n} \mu(Q) = 0$. 
\end{itemize}
\end{definition}

An important example, which links this definition with the preceding considerations,  is the cube $X=[0,1)^N$ endowed with Lebesgue measure and the tree of its dyadic subcubes. Any probability space equipped with a tree gives rise to the corresponding multilinear maximal operators $\mathcal{M}=\mathcal{M}_{X,\mathcal{T}}$ and Muckenhoupt's classes $A_{\vec{P}}=A_{\vec{P}}(X,\mathcal{T})$: the definitions are word-by-word the same, one only needs to change the base space $\R^N$ to $X$ and replace the dyadic lattice $\mathscr{D}$ by the tree $\mathcal{T}$.

We turn our attention to the formulation of our main result. For a given $c\geq 1$ and a vector $\vec{P}=(p_1,p_2,\ldots,p_m)\in (1,\infty)^m$ with $p^{-1}=p_1^{-1}+p_2^{-1}+\ldots+p_m^{-1}$, let $\alpha=\alpha(\vec{P},c)\in [0,\min_j p_j-1)$ be a unique solution to the equation
\begin{equation}\label{defalfa}
 c(\alpha+1)\prod_{j=1}^m (1+\alpha(1-p_j'))^{p/p_j'}=1.
\end{equation}

\begin{theorem}\label{mainthm}
Let $(X,\mu)$ be an arbitrary nonatomic probability space with a tree structure $\mathcal{T}$. Then for any vector $\vec{P}=(p_1,p_2,\ldots,p_m)\in (1,\infty)^m$ with $p^{-1}=p_1^{-1}+p_2^{-1}+\ldots+p_m^{-1}$ and any vector $\vec{w}\in A_{\vec{P}}(X,\mathcal{T})$ we have the inequality
\begin{equation}\label{mainin}
\|\mathcal{M}_{X,\mathcal{T}}\|_{L^{p_1}(w_1)\times L^{p_2}(w_2)\times \ldots \times L^{p_m}(w_m)\to L^p(v_{\vec{w}})}\leq  \prod_{j=1}^m \frac{p_j}{p_j-1-\alpha(p,[\vec{w}]_{A_{\vec{P}}})}.
\end{equation}
The constant on the right is the best possible: for any $c\geq 1$ and $\e>0$ there is a weight $\vec{w}$ satisfying $[\vec{w}]_{A_{\vec{P}}}\leq c$ such that
\begin{equation}\label{rever}
 \|\mathcal{M}_{X,\mathcal{T}}\|_{L^{p_1}(w_1)\times L^{p_2}(w_2)\times \ldots \times L^{p_m}(w_m)\to L^p(v_{\vec{w}})}>\prod_{j=1}^m \frac{p_j}{p_j-1-\alpha(p,c)}-\e.
 \end{equation}
\end{theorem}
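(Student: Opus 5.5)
We prove \eqref{mainin} with the Bellman function method, in the spirit of \cite{Os}. Write $c=[\vec{w}]_{A_{\vec{P}}}$ and $\alpha=\alpha(\vec{P},c)$. The first step is a change of variables that linearizes the problem. Put $f_j=g_j\sigma_j$, so that $\|f_j\|_{L^{p_j}(w_j)}^{p_j}=\int |g_j|^{p_j}\sigma_j\,d\mu$ and $\langle |f_j|\rangle_Q=\langle |g_j|\sigma_j\rangle_Q$. We may assume $g_j\geq 0$. The problem is then a statement about the process indexed by the tree, with the following state variables on each $Q\in\mathcal{T}$:
\begin{itemize}
\item $x_j=\langle g_j\sigma_j\rangle_Q$ and $y_j=\langle g_j^{p_j}\sigma_j\rangle_Q$;
\item $u=\langle v_{\vec{w}}\rangle_Q$ and $s_j=\langle\sigma_j\rangle_Q$;
\item the running maximum $z\geq \prod_j x_j$.
\end{itemize}
The domain is given by three constraints: $u\prod_j s_j^{p/p_j'}\leq c$, Hölder's inequality $x_j^{p_j}\leq y_js_j^{p_j-1}$, and the Hölder-type bound linking $u$ to the $s_j$. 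The $A_{\vec{P}}$ condition becomes exactly the first of these, imposed on every element of the tree.

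The second step is to exhibit a special function $B(\vec{x},\vec{y},u,\vec{s},z)$ with three properties:
\begin{itemize}
\item \textbf{majorization:} $B\geq z^pu-\prod_jK_j^{p}\sum_j \frac{p}{p_j}y_j$, where $K_j=p_j/(p_j-1-\alpha)$;
\item \textbf{initial condition:} $B(\ldots,z=\prod x_j)\leq 0$;
\item \textbf{concavity:} $B$ is concave along the splittings $Q\to C(Q)$, and nonincreasing in $z$ near the diagonal $z=\prod_j x_j$.
\end{itemize}
The final estimate comes out with a sum $\sum_j\frac{p}{p_j}y_j$, which is converted to $\prod_j y_j^{p/p_j}$ by homogeneity: scale $g_j\mapsto\lambda_jg_j$ and apply Young's inequality. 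Integrating along the tree, and using (iv) together with Lebesgue differentiation to pass to the limit, then gives \eqref{mainin}.

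To guess $B$, we take the linear one-weight function from \cite{Os} as a template. It has the form $z^{p}u$ minus multiples of $y$, plus a correction term built from $(x/z)$ raised to the power $1+d$. In the multilinear case the natural candidate is
$$ B=z^pu-\sum_j \frac{p}{p_j}K_j^p\,y_j+\kappa\, z^{p}u\Big(\prod_j\frac{x_j}{z^{1/m}\ldots}\Big)^{\ldots}, $$
with exponents chosen so that each factor carries $1+\alpha(1-p_j')$. This is precisely the quantity appearing in \eqref{defalfa}, whose geometric meaning is the same as in Figure \ref{interd}: a tangency condition to the surface $u\prod_j s_j^{p/p_j'}=c$.

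The main obstacle is verifying concavity of $B$ on the nonconvex domain cut out by the $A_{\vec{P}}$ condition. As in \cite{Os}, I would handle this in two steps. First, establish concavity on the convex hull of a suitable subdomain. Second, use a lemma saying that any splitting of a point of the domain can be refined into splittings along segments that stay inside a slightly enlarged domain (with $c$ replaced by a constant close to it), invoking nonatomicity. The defining equation \eqref{defalfa} should be exactly what makes the Hessian degenerate along the extremal direction.

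For sharpness \eqref{rever}, I would take $X=[0,1)$ with the dyadic tree; by transference the same construction works in any nonatomic space with a tree. We use power weights $w_j(t)=t^{\gamma_j}$, chosen so that $v_{\vec{w}}$ and every $\sigma_j$ are powers of $t$ whose exponents sit on the tangency point defining $\alpha$, which gives $[\vec{w}]_{A_{\vec{P}}}\leq c$. The test functions are $f_j=t^{-\beta_j}\chi$, with $\beta_j$ approaching the critical integrability threshold. A direct computation of $\mathcal{M}\vec{f}(t)\approx\prod_j t^{-\beta_j}\cdot\frac{1}{1-\beta_j}$, with the ratio tending to $\prod_j K_j$, should then give the lower bound. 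Extremizing over $\gamma_j$ under the characteristic constraint should reproduce \eqref{defalfa}.
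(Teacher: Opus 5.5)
\textbf{The upper bound is missing its core.} You never actually specify the special function: the displayed candidate has ellipses in both the base and the exponent. So none of majorization, initial condition or concavity can be checked, and concavity, which you yourself call the main obstacle, is left open.

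The plan you sketch for concavity (local concavity plus refining splittings into short segments via nonatomicity) is also not available here. In the upper bound the splittings $Q\to C(Q)$ are dictated by the given tree, and a child may carry an arbitrarily small fraction of $\mu(Q)$. The segment from the parent's point of averages to a child's point can then leave $\{u\prod_j s_j^{p/p_j'}\le c'\}$ for every $c'$. Nonatomicity of $\mu$ does not let you refine a splitting you do not control, so you genuinely need Jensen's inequality for arbitrary finite convex combinations.

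The idea you are missing is the paper's reduction. Prove only the testing bound \eqref{testing}, with $C=\prod_j\frac{p_j-1}{p_j-1-\alpha}$, using the $(m+2)$-variable function $B(s;t,u)$. For fixed $t$ this is an infimum of affine functions of $(s,u)$, hence concave on all of $(0,\infty)^{m+1}$, and the non-convexity of the domain never enters. The $A_{\vec P}$ condition is used only pointwise, at each child's own averages, in Lemma~\ref{propB2} and in \eqref{init}; the equation \eqref{defalfa} is precisely what yields \eqref{zero}. The factor $\prod_j p_j'$ then comes from the multilinear Carleson embedding theorem, and $C\prod_j p_j'=\prod_j\frac{p_j}{p_j-1-\alpha}$.

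\textbf{The sharpness argument fails as proposed.} On $[0,1)$ with the dyadic tree, for $t\in[2^{-n-1},2^{-n})$ the best tree element through $t$ is $[0,2^{-n})$, whose length lies in $(t,2t]$. So the approximation $\mathcal{M}\vec f(t)\approx\prod_j t^{-\beta_j}/(1-\beta_j)$ is off by a non-negligible factor. Concretely, take $w_j=t^\alpha$ and $\beta_jp_j=\alpha+1-\epsilon$. Then $\mathcal{M}\vec f=2^{n\sum_j\beta_j}\prod_j(1-\beta_j)^{-1}$ exactly on $[2^{-n-1},2^{-n})$. Intervals inside $[2^{-n-1},2^{-n})$ give at most $2^{(n+1)\sum_j\beta_j}$, which is smaller because $(1-\beta)^{-1}\ge 2^{\beta}$. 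Hence
$$ \frac{\|\mathcal{M}\vec f\|_{L^p(v_{\vec w})}^p}{\prod_j\|f_j\|_{L^{p_j}(w_j)}^p}\longrightarrow \frac{1-2^{-(\alpha+1)}}{(\alpha+1)\ln 2}\prod_{j=1}^m\Big(\frac{p_j}{p_j-1-\alpha}\Big)^p\qquad(\epsilon\to 0),$$
which is strictly below the claimed constant. For $c=1$ you only get $(2\ln 2)^{-1/p}\prod_j p_j'$.

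There is also no transference available: $\mathcal{M}_{X,\mathcal{T}}$ depends on $\mathcal{T}$, and \eqref{rever} is asserted for every given tree. What is needed is a discrete construction with ratio $1-\delta\to 1$, built inside the given tree. By Lemma~\ref{Melas}, take nested sets $A_0=X\supset A_1\supset\cdots$, each a disjoint union of tree elements, with $\mu(Q\cap A_n)=(1-\delta)^{n-k}\mu(Q)$ for every atom $Q$ of $A_k$. On $A_n\setminus A_{n+1}$ set the weights equal to $(1-\delta)^{n\tilde\alpha}$ and the functions equal to $(1-r\delta)^{-n(\tilde\alpha+1)/p_j}$.

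The $A_{\vec P}$ bound must then be checked on tree elements $R$ straddling some $A_k$. Their averages lie on the segment $\mathcal{P}\mathcal{R}$, and choosing $\tilde\alpha<\alpha$ gives the slack that keeps this segment below the surface $u\prod_j s_j^{p/p_j'}=c$ for small $\delta$. Your power weights would likewise need a separate check on dyadic intervals away from $0$, which you omit.
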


In particular, in \eqref{mainin} we recover the optimal dependence on the characteristic $[\vec{w}]_{A_{\vec{P}}}$ from \cite{LKS}. To see this, re-order the exponents so that $p_1=p_2=\ldots=p_k<p_{k+1}\leq p_{k+2}\leq \ldots \leq p_m$ and rewrite \eqref{defalfa} in the form
$$ c(\alpha+1)(p_1-1-\alpha)^{kp/p_1'}\prod_{j=k+1}^m (p_j-1-\alpha)^{p/p_j'}=(p_1-1)^{kp/p_1'}\prod_{j=k+1}^m (p_j-1)^{p/p_j'},$$
so that $p_1-1-\alpha=O(c^{-p_1'/kp})$ as $c\to \infty$. Plugging this into \eqref{mainin}, we obtain
 $$ \prod_{j=1}^m \frac{p_j}{p_j-1-\alpha(p,[\vec{w}]_{A_{\vec{P}}})}\leq \kappa_{\vec{P}} [\vec{w}]_{A_{\vec{P}}}^{p_1'/p},$$
 and hence the exponent is indeed $p_1'/p=\max\{p_1'/p,p_2'/p,\ldots,p_m'/p\}$.

  We would also like to emphasize that the estimate \eqref{mainin} is sharp for each individual probability space. Finally, let us mention that standard translation and scaling arguments allow to extend the above results to the non-probabilistic, dyadic context studied at the beginning.

%, which acts on vectors $\vec{f}=(f_1,f_2,\ldots,f_m)$ of integrable functions (random variables) $f_j:X\to \R$ by 
%$$ \mathcal{M}_\mathcal{T}f(x)=\sup_{Q\in \mathcal{T}} \prod_{j=1}^m \langle |f_j|\rangle_{Q,\mu},$$
%where $\langle \varphi\rangle_{Q,\mu}=\frac{1}{\mu(Q)}\int_Q \varphi\mbox{d}\mu$ is the average of $\varphi$ over $Q$ with respect to the %measure $\mu$.
%One defines $A_{\vec{P}}$ weights analogously. Given a vector $\vec{P}=(p_1,p_2,\ldots,p_m)$ of parameters as above, we say that a vector $\vec{w}=(w_1,w_2,\ldots,w_m)$ of positive functions on $X$ satisfies condition $A_{\vec{P}}$, if
%$$ [\vec{w}]_{A_{\vec{P}}}:=\sup_{Q\in \mathcal{T}} \langle v_{\vec{w}}\rangle_Q \prod_{j=1}^m \langle w_j^{1-p_j'}\rangle_Q^{p/p_j'}<\infty.$$

As an application, we obtain the following multilinear Hardy-type inequality.

\begin{theorem}\label{Hardythm}
Let $\vec{P}=(p_1,p_2,\ldots,p_m)\in (1,\infty)^m$ and $p^{-1}=p_1^{-1}+p_2^{-1}+\ldots+p_m^{-1}$. Assume further that  $\vec{w}=(w_1,w_2,\ldots,w_m)$ is a vector of positive, locally integrable functions on $(0,\infty)$. Suppose that the function $v_{\vec{w}}=\prod_{j=1}^m w_j^{p/p_j}$ satisfies
$$ [\vec{w}]_{A_{\vec{P}}}:=\sup \langle v_{\vec{w}}\rangle_I \prod_{j=1}^m \langle w_j^{1-p_j'}\rangle_I^{p/p_j'}<\infty,$$
where the supremum is taken over all intervals  $I=[0,t]$, $t\in \R$. Then for any vector $\vec{f}=(f_1,f_2,\ldots,f_m)$ with $f_j\in L^{p_j}(w_j)$, we have
\begin{equation}\label{Hardy}
\begin{split}
\left(\int_0^\infty \prod_{j=1}^m \left(\int_0^t |f_j(s)|\mbox{d}s\right)^p\frac{v_{\vec{w}}(t)}{t^{mp}}\mbox{d}t\right)^{1/p} & \leq \prod_{j=1}^m \frac{p_j\|f_j\|_{L^{p_j}(w_j)}}{p_j-1-\alpha(p,[\vec{w}]_{A_{\vec{P}}})}.
\end{split}
\end{equation}
The constant $\prod_{j=1}^m p_j/(p_j-1-\alpha(p,[\vec{w}]_{A_{\vec{P}}}))$ is the best possible.
\end{theorem}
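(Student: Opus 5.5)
The plan is to deduce Theorem \ref{Hardythm} from Theorem \ref{mainthm}. The model case is $X=[0,1]$ with Lebesgue measure, and the operator is essentially a maximal operator taken over the nested intervals $[0,t]$. The first step is to rescale. For a fixed $T>0$, restrict everything to $[0,T]$ and normalize the measure to $\mathrm{d}s/T$. After a change of variables, the left side of \eqref{Hardy} becomes
\[
\Bigl(\int_0^T \prod_j \langle |f_j|\rangle_{[0,t]}^p\, v_{\vec{w}}(t)\,\mathrm{d}t\Bigr)^{1/p},
\]
since $\prod_j (\int_0^t |f_j|)^p/t^{mp}=\prod_j\langle |f_j|\rangle_{[0,t]}^p$. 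We then let $T\to\infty$ by monotone convergence.

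The second step builds a tree adapted to the intervals $[0,t]$. The pointwise bound we want is
\[
\prod_j\langle |f_j|\rangle_{[0,t]}\le \mathcal{M}_{X,\mathcal{T}}\vec f(t).
\]
We also need $[\vec w]_{A_{\vec P}(X,\mathcal T)}$ to be controlled by the characteristic over intervals $[0,t]$. A genuine tree whose elements are all of the form $[0,t]$ is impossible, because children must be disjoint. The better route is to mimic the Bellman-function proof of Theorem \ref{mainthm} directly in the continuous setting. That proof presumably yields a function $B(x,y,\ldots)$ of the variables
\[
\bigl(\langle v_{\vec w}\rangle,\ \langle\sigma_j\rangle,\ \langle |f_j|^{p_j}w_j\rangle,\ \langle |f_j|\rangle\bigr)
\]
with the required concavity-type properties. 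We then apply it along the continuous path
\[
t\mapsto \bigl(\langle v_{\vec w}\rangle_{[0,t]},\langle\sigma_j\rangle_{[0,t]},\ldots\bigr)
\]
and differentiate in $t$. The derivative of the averages over $[0,t]$ is $(g(t)-\langle g\rangle_{[0,t]})/t$. This is an infinitesimal version of the splitting $[0,t+h]=[0,t]\cup(t,t+h]$, in which a small piece is merged with the interval. The "maximal" variable here is just the running average itself, with no supremum. Integrating the resulting differential inequality from $0$ to $T$ gives \eqref{Hardy}, with the boundary terms at $0$ controlled via local integrability.

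The alternative, if one insists on invoking Theorem \ref{mainthm} as a black box, uses the tree on $[0,1]$ generated by splitting $[0,t]$ into $[0,t/2^{1/n}]$-type pieces. Here one uses the fact that the trees in Theorem \ref{mainthm} may have several children. A fine geometric partition approximates the continuous averages, but the children $(a,b]$ not containing $0$ also enter the $A_{\vec P}$ characteristic. That is the main obstacle, so I would take the Bellman-function route, where only intervals $[0,t]$ appear.

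Sharpness comes from the extremal examples for \eqref{rever}. I expect them to be power-type, e.g.
\[
w_j(s)=s^{\gamma_j}, \qquad f_j(s)=s^{-(1+\delta_j)/p_j}\chi_{(0,1)}(s)\,w_j(s)^{-1/p_j}\ \text{(nearly)},
\]
with exponents chosen so that $[\vec w]_{A_{\vec P}}=c$ over intervals $[0,t]$, which is computed exactly for power weights. The exponents are tuned via $\alpha$ solving \eqref{defalfa}. The ratio of the two sides of \eqref{Hardy} is then computed explicitly and tends to $\prod_j p_j/(p_j-1-\alpha)$ as the truncation parameter degenerates. The computation of $[\vec w]_{A_{\vec P}}$ for power weights reproduces equation \eqref{defalfa}. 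I expect this to be a routine check, with the tangent-line picture of Figure \ref{interd} as the guide.
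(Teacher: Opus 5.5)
\textbf{There is a genuine gap in the inequality part.} Your preferred route assumes that the proof of Theorem \ref{mainthm} supplies a Bellman function in the variables $(\langle v_{\vec w}\rangle,\langle\sigma_j\rangle,\langle|f_j|^{p_j}w_j\rangle,\langle|f_j|\rangle)$. You would then differentiate this function along the path of averages over $[0,t]$. The paper's proof supplies no such function, and you do not construct one.

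The function $B(s;t,u)$ of Section 2 involves only $\langle\sigma_j\rangle$, a running maximum and $\langle v_{\vec w}\rangle$. It proves only the testing bound \eqref{testing}, i.e.\ the case $f_j=\sigma_j$. General $\vec f$ are reached separately, through the stopping sets $E(Q)$ and the multilinear Carleson embedding, Theorem \ref{multilinear-carleson}.

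A continuous-time version of your plan would therefore need two pieces, and neither is supplied:
\begin{itemize}
\item a differential-inequality analogue of the testing argument;
\item a continuous Carleson embedding along the chain $\{[0,t]\}$ with constant $\prod_j p_j'$.
\end{itemize}
In the first piece, the middle variable at time $\tau$ must be the running supremum $\sup_{r\ge\tau}\prod_j\langle\sigma_j\rangle_{[0,r]}$, not the bare running average you propose. Lemma \ref{propB2} only controls upward jumps of this variable to the current product. An $f$-dependent Bellman function with the sharp constant is precisely the hard object the paper avoids by using the Carleson embedding.

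\textbf{The obstacle that made you drop the black-box route has a simple fix, which is what the paper does.} Set $t_n=(1-\delta)^nT$. Build the tree on $(0,T]$ from the intervals $(0,t_n]$, the annuli $(t_{n+1},t_n]$, and the dyadic descendants of the annuli. Replace $\sigma_j$ and $f_j$ by their averages $\tilde\sigma_j$, $\tilde f_j$ over each annulus, and put $\tilde w_j=\tilde\sigma_j^{1-p_j}$. Then:
\begin{itemize}
\item Every tree element inside an annulus sees constant weights, so its $A_{\vec P}$ quantity equals $1$.
\item On $(0,t_n]$ one has $\langle\tilde\sigma_j\rangle=\langle\sigma_j\rangle$, and H\"older on each annulus gives $\langle v_{\vec{\tilde w}}\rangle\le\langle v_{\vec w}\rangle$. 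Hence $[\vec{\tilde w}]_{A_{\vec P}}\le[\vec w]_{A_{\vec P}}$, using only intervals $[0,t]$. The constant in \eqref{mainin} increases with the characteristic, so this suffices.
\item Joint convexity of $(a,b)\mapsto a^{p_j}b^{1-p_j}$ gives $\|\tilde f_j\|_{L^{p_j}(\tilde w_j)}\le\|f_j\|_{L^{p_j}(w_j)}$ on $(0,T]$.
\item On $(t_n,t_{n-1}]$ one has $\mathcal{M}\vec{\tilde f}\ge\prod_j\langle f_j\rangle_{(0,t_{n-1}]}$.
\end{itemize}
Apply \eqref{mainin} and let $\delta\to0$ with Fatou's lemma, noting that $v_{\vec{\tilde w}}=\prod_j\tilde\sigma_j^{-p/p_j'}\to v_{\vec w}$ a.e. This yields \eqref{local}.

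\textbf{Your sharpness sketch is essentially the paper's.} Take $w_j(t)=t^{\alpha}$ for all $j$; this gives characteristic exactly $c$ by \eqref{defalfa}. Make the $f_j$ slightly less singular than $t^{-(1+\alpha)/p_j}$, or truncate them as in \eqref{funHardy}, because $t^{-(1+\alpha)/p_j}$ itself is not in $L^{p_j}(w_j)$.
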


Our reasoning will rest on a multilinear version of Carleson embedding theorem and a tight `testing' estimate \eqref{testing} below. To show the latter bound, we will exploit the so-called Bellman function method, a powerful tool for establishing inequalities, used widely in probability and analysis. The Bellman function we invent involves as many as $m+2$ variables, but the calculations are quite quick and do not require elaborate analysis.

The paper is organized as follows. In the next section, we present the proof of the estimate \eqref{mainin}. Section 3 is devoted to the lower bounds for the constants involved. The final part of the paper contains the description of informal steps which lead to the discovery of the best constants and the Bellman function leading to our main result.

\medskip

\section{A special function and its properties}
Throughout this section, $c>1$ is a given number and $\vec{P}=(p_1,p_2,\ldots,p_m)$ is a fixed vector of exponents satisfying $p^{-1}=p_1^{-1}+p_2^{-1}+\ldots+p_m^{-1}$ and $p_j>1$ for each $j$. Furthermore, $\alpha=\alpha(\vec{P},c)$ is defined as in \eqref{defalfa} and we introduce the auxiliary constant
\begin{equation}\label{defCe}
 C=C_{\vec{P},c}=\prod_{j=1}^m \big(1+\alpha(\vec{P},c)(1-p_j')\big)^{-1}.
 \end{equation}
For a fixed sequence $s^*=(s^*_1,s^*_2,\ldots,s^*_m)$ of positive numbers, it will be convenient to use the multiindex-type notation
$$ (s^*)^{p/\underline{p}}=(s_1^*)^{p/p_1}(s_2^*)^{p/p_2}\ldots (s_m^*)^{p/p_m}$$
and
$$ (s^*)^{-p/\underline{p}'}=(s_1^*)^{-p/p_1'}(s_2^*)^{-p/p_2'}\ldots (s_m^*)^{-p/p_m'}.$$
Consider the function $\beta:(0,\infty)^m\to \R$, given by
%$$ B^{s^*}(s;t,u)=pC^p(s^*)^{p/\underline{p}}\left[\frac{\alpha+1}{\alpha p}\left(\frac{u}{(s^*)^{-p/\underline{p}'}}-1\right)+\sum_{j=1}^m\left(\frac{\alpha+1}{\alpha p_j'}-1\right)\left(\frac{s_j}{s_j^*}-1\right)\right]$$
$$ \beta(s_1,s_2,\ldots,s_m)=pC^p\left[\sum_{j=1}^m\left(\frac{\alpha+1}{\alpha p_j'}-1\right)s_j-\frac{m}{\alpha}\right].$$
The central Bellman function of this paper is the function $B:(0,\infty)^m\times (0,\infty)\times (0,\infty)\to \R$ defined by the formula
$$ B(s;t,u)=\frac{\alpha+1}{\alpha} t^pu+\inf\left\{(s^*)^{p/\underline{p}}\beta\left(\frac{s_1}{s_1^*},\frac{s_2}{s_2^*},\ldots, \frac{s_m}{s_m^*}\right)\right\},$$
where the infimum is taken over all sequences $s^*=(s_1^*,s_2^*,\ldots,s_m^*)$ of positive numbers satisfying $Cs_1^*s_2^*\ldots s_m^*=t$.
For a fixed $t$, the function $B$, treated as the function of $u$ and $s$, is the infimum of affine functions, so it is concave. We will prove the following two key properties of this object.

\begin{lemma}\label{propB1}
For any $s=(s_1,s_2,\ldots,s_m)\in (0,\infty)^m$ and any $u\geq s^{-p/\underline{p}'}$, we have
\begin{equation}\label{maj}
 B(s;t,u)\geq t^ps^{-p/\underline{p}'}-C^ps^{p/\underline{p}}.
\end{equation}
\end{lemma}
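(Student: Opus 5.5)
Since the coefficient $\frac{\alpha+1}{\alpha}t^p$ of $u$ in $B$ is positive, $B(s;t,u)$ is nondecreasing in $u$. So I would reduce to the extremal case $u=s^{-p/\underline{p}'}$. Because $B$ is defined through an infimum, it then suffices to show that for every admissible $s^*$, that is, every $s^*$ with $Cs_1^*\cdots s_m^*=t$,
$$\frac{\alpha+1}{\alpha}t^ps^{-p/\underline{p}'}+(s^*)^{p/\underline{p}}\beta\Big(\frac{s_1}{s_1^*},\ldots,\frac{s_m}{s_m^*}\Big)\geq t^ps^{-p/\underline{p}'}-C^ps^{p/\underline{p}}.$$

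\textbf{Normalization.} Put $x_j=s_j/s_j^*$ and use the constraint to write $t^p=C^p\prod_j (s_j^*)^p$. Since $p/p_j+p/p_j'=p$ for each $j$, we get
$$t^ps^{-p/\underline{p}'}=C^p(s^*)^{p/\underline{p}}\prod_j x_j^{-p/p_j'},\qquad s^{p/\underline{p}}=(s^*)^{p/\underline{p}}\prod_j x_j^{p/p_j}.$$
Dividing by $C^p(s^*)^{p/\underline{p}}$, the claim becomes $\Phi(x)\geq 0$ on $(0,\infty)^m$, where
$$\Phi(x)=\frac1\alpha\prod_j x_j^{-p/p_j'}+p\sum_{j=1}^m\Big(\frac{\alpha+1}{\alpha p_j'}-1\Big)x_j-\frac{pm}{\alpha}+\prod_j x_j^{p/p_j}.$$

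\textbf{The point $x=(1,\ldots,1)$.} Using $\sum_j 1/p_j'=m-1/p$, a direct computation gives $\Phi(1,\ldots,1)=\frac1\alpha-\frac{\alpha+1}{\alpha}+1=0$. Likewise,
$$\partial_{x_j}\Phi(1,\ldots,1)=-\frac{p}{\alpha p_j'}+\frac{p(\alpha+1)}{\alpha p_j'}-p+\frac{p}{p_j}=0.$$
So $(1,\ldots,1)$ is a critical point at which $\Phi$ vanishes.

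\textbf{Global minimality via convexity.} The term $\prod_j x_j^{p/p_j}$ is concave in $x$, so I would not argue convexity in $x$. Instead, substitute $x_j=e^{y_j}$. Each term of $\Phi$ is then a positive multiple of the exponential of a linear form in $y$, plus a constant, provided the coefficients are positive. The coefficient $\frac{\alpha+1}{\alpha p_j'}-1$ is positive exactly when $\alpha(p_j'-1)<1$, i.e.\ $\alpha<p_j-1$, which holds because $\alpha\in[0,\min_j p_j-1)$. (The degenerate case $\alpha=0$, i.e.\ $c=1$, is excluded, since $c>1$ in this section.) Hence $y\mapsto\Phi(e^{y})$ is convex on $\R^m$. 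It has a critical point at $y=0$, because $\partial_{y_j}=x_j\partial_{x_j}$, so $y=0$ is a global minimum with value $0$. Therefore $\Phi\geq0$, which proves \eqref{maj}.

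The only delicate step is choosing the logarithmic change of variables, which turns the concave product term into a convex one, and checking the positivity of the linear coefficients. Notably, the defining equation \eqref{defalfa} of $\alpha$ is not used here, only the range of $\alpha$.
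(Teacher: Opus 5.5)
Your proof is correct and is essentially the paper's argument. The paper makes the same reduction to $u=s^{-p/\underline{p}'}$ and the same substitution $r_j=s_j/s_j^*$, then (in effect multiplying your $\Phi$ by $\alpha/(mp)$) applies the weighted AM--GM (Young) inequality. That inequality's two hypotheses, that the weights sum to $1$ and that the weighted product of the terms is identically $1$, are exactly your checks $\Phi(1,\ldots,1)=0$ and $\nabla\Phi(1,\ldots,1)=0$, so your convexity argument in logarithmic coordinates simply reproves the AM--GM step, and your explicit check that the weights $\alpha+1-\alpha p_j'$ are nonnegative (i.e.\ $\alpha<p_j-1$) makes explicit a point the paper uses only implicitly.
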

\begin{proof}
Clearly, we have $\partial_u B\geq 0$ (see the formula for $B$). Therefore, all we need is the estimate
$$  B(s;t,s^{-p/\underline{p}'})\geq t^ps^{-p/\underline{p}'}-C^ps^{p/\underline{p}}.
$$
To this end, by the very definition of $B$, we must show that
$$ \frac{t^ps^{-p/\underline{p}'}}{\alpha}+(s^*)^{p/\underline{p}}\beta\left(\frac{s_1}{s_1^*},\frac{s_2}{s_2^*},\ldots,\frac{s_m}{s_m^*}\right)+C^p s^{p/\underline{p}}\geq 0$$
for any $s_1^*,\,s_2^*,\,\ldots,\,s_m^*>0$ with $C s_1^*s_2^*\ldots s_m^*=t$. Let us divide throughout by $mpC^p(s^*)^{p/\underline{p}}/\alpha$ and substitute $r_j=s_j/s_j^*$ to obtain the equivalent form
$$ \frac{r^{-p/\underline{p}'}}{pm}+\sum_{j=1}^m \frac{\alpha+1-\alpha p_j'}{mp_j'} r_j+\frac{\alpha}{pm}r^{p/\underline{p}}\geq 1.$$
This bound follows immediately from Young's inequality, because of the two identities
$$ \frac{1}{pm}+\sum_{j=1}^m \frac{\alpha+1-\alpha p_j'}{mp_j'} +\frac{\alpha}{pm}=1$$
and
$$ \big(r^{-p/\underline{p}'}\big)^{1/pm}\cdot \prod_{j=1}^m r_j^{(\alpha+1-\alpha p_j')/mp_j'} \cdot \big(r^{p/\underline{p}}\big)^{\alpha/pm}=1.$$
The proof is complete.
\end{proof}

\begin{lemma}\label{propB2}
Suppose that $(s;t,u)\in (0,\infty)^m\times (0,\infty)\times (0,\infty)$ satisfies $ s_1s_2\ldots s_m\geq t$ and $ u\leq cs^{-p/\underline{p}'}$. Then
\begin{equation}\label{monoto}
 B(s;t,u)\geq B(s;s_1s_2\ldots s_m,u).
\end{equation}
\end{lemma}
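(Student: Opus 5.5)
The plan is to reduce \eqref{monoto} to a monotonicity statement in the variable $t$. Fix $s$ and $u$ with $u\le c\,s^{-p/\underline{p}'}$, and put $S=s_1s_2\ldots s_m$. It suffices to show that $t\mapsto B(s;t,u)$ is nonincreasing on $(0,S]$, which I would do by differentiating in $t$.

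\emph{Step 1: explicit form of $B$.} In the infimum defining $B$, substitute $s_j^*=s_j/r_j$. The constraint $Cs_1^*\ldots s_m^*=t$ becomes $r_1r_2\ldots r_m=R:=CS/t$, and the infimized term becomes
$$pC^p s^{p/\underline{p}}\, r^{-p/\underline{p}}\Big[\sum_{j=1}^m a_j r_j-\frac{m}{\alpha}\Big],\qquad a_j=\frac{\alpha+1}{\alpha p_j'}-1=\frac{1+\alpha(1-p_j')}{\alpha p_j'}.$$
Each $a_j$ is positive, because $\alpha<\min_j p_j-1$ gives $1+\alpha(1-p_j')>0$. Define
$$g(R)=\inf\Big\{r^{-p/\underline{p}}\Big[\sum_j a_jr_j-\frac m\alpha\Big]\,:\,\textstyle\prod_j r_j=R\Big\}.$$
On the product constraint, $r^{-p/\underline{p}}$ is a fixed multiple of $\prod_j r_j^{(1/p_j'-1/m)p}$-type factors, so I would compute $g$ via a Lagrange multiplier. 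One should get an explicit minimizer with $a_jr_j$ proportional to fixed weights (essentially $p/p_j$), giving $g$ in closed form. Then
$$B(s;t,u)=\frac{\alpha+1}{\alpha}t^pu+pC^ps^{p/\underline{p}}\,g(CS/t).$$

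\emph{Step 2: the derivative.} We have
$$\partial_tB=\frac{(\alpha+1)p}{\alpha}t^{p-1}u-pC^ps^{p/\underline{p}}\,g'(R)\,\frac{CS}{t^2}.$$
Using the hypothesis $u\le c\,s^{-p/\underline{p}'}$ (assuming $g'\ge0$, so that only the first term needs bounding), it suffices to prove
$$\frac{\alpha+1}{\alpha}\,c\,t^{p+1}s^{-p/\underline{p}'}\le C^{p+1}S\,s^{p/\underline{p}}\,g'(R).$$
Since $s^{p/\underline{p}}s^{p/\underline{p}'}=S^p$ and $t=CS/R$, all dependence on $s$ cancels. What remains is a one-variable inequality in $R$ on the range $R\ge C$ (which corresponds exactly to $t\le S$).

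\emph{Step 3 (main obstacle).} The hardest step is verifying this one-variable inequality for all $R\ge C$. I expect the following. At the endpoint $R=C$ (that is, $t=S$), the inequality should hold with equality precisely because of the defining relation \eqref{defalfa} for $\alpha$ together with the definition \eqref{defCe} of $C$; this is where the factor $c(\alpha+1)\prod_j(1+\alpha(1-p_j'))^{p/p_j'}=1$ should appear. For $R>C$, the two sides should be comparable powers of $R$, and the ratio should be monotone in the favourable direction. I would first check this endpoint identity. I would then check the power of $R$ on each side, using the explicit $g$ from Step 1 (including the sign of $g'$). If the explicit computation of $g$ becomes unwieldy, the fallback is to avoid computing the infimum altogether. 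In that approach one fixes the near-optimal $s^*$ for the parameter $S$, rescales it to $\lambda s^*$ with $\lambda^m=t/S$ so that it is admissible for $t$, and bounds $B(s;t,u)$ from below. The same endpoint identity should then close the argument via a convexity/Young-type estimate, as in the proof of Lemma~\ref{propB1}.
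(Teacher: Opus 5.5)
\textbf{Verdict: genuine gap in Step 3.} The strategy can be completed, but the step that carries all the content is only conjectured, and the conjectured mechanism is wrong when the $p_j$ differ.

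Your reduction in Steps 1--2 is sound. The term $\frac{\alpha+1}{\alpha}t^pu$ has positive $t$-derivative, so it suffices to take $u=cs^{-p/\underline{p}'}$. The $s$-dependence then cancels, and you need $\frac{(\alpha+1)c}{\alpha}R^{-p-1}\le g'(R)$ for $R\ge C$, which is indeed an identity at $R=C$. The problem is how you propose to check it for $R>C$. With $D=\sum_j a_jr_j-\frac{m}{\alpha}$ (the bracket), the Lagrange conditions for $g$ read
\[
a_jr_j=\frac1\alpha+\frac{p}{p_j}D,\qquad \prod_{j=1}^m\frac{1/\alpha+(p/p_j)D}{a_j}=R .
\]
So $D(R)$ is a root of a degree-$m$ polynomial. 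The minimizer is not ``$a_jr_j\propto p/p_j$'', $g$ has no closed form, and neither side of your inequality is a power of $R$. Powers appear only for $m=1$ or $p_1=\dots=p_m$. Hence ``comparable powers, monotone ratio'' gives you nothing concrete to verify.

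Your fallback is logically reversed. Inserting one particular admissible $s^*$ (rescaled from a near-optimizer for $S$) into the infimum defining $B(s;t,u)$ gives an \emph{upper} bound for $B(s;t,u)$. A lower bound must start from an arbitrary $s^*$ admissible for $t$, and you give no comparison argument for that direction.

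To finish along your lines, use the envelope theorem instead of a formula for $g$. The multiplier for the constraint $\sum_j\log r_j=\log R$ is $(r^*)^{-p/\underline{p}}/\alpha$, so $g'(R)=(r^*)^{-p/\underline{p}}/(\alpha R)>0$. Your inequality then becomes $\prod_j (r_j^*)^{p/p_j'}\ge c(\alpha+1)$.

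\begin{itemize}
\item At $R=C$ one gets $D=-1/(\alpha p)$ and $r_j^*=(1+\alpha(1-p_j'))^{-1}$, and equality is exactly \eqref{defalfa}.
\item Each $r_j^*$ is increasing in $D$, and $D$ is increasing in $R$, so the inequality holds for all $R\ge C$.
\item You still need two supporting facts. The objective tends to $+\infty$ at the ends of $\{\prod_j r_j=R\}$, so the unique critical point is the minimum. The implicit function theorem gives smoothness of $g$.
\end{itemize}

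The paper avoids all of this. Since $\partial_uB(s;t,u)=\frac{\alpha+1}{\alpha}t^p\le\partial_uB(s;S,u)$, it reduces to $u=cs^{-p/\underline{p}'}$. There, $B(s;S,u)\le 0$ follows from the explicit choice $s_j^*=s_j(1+\alpha(1-p_j'))$. Also $B(s;t,u)\ge 0$ for \emph{every} $t$, by a single Young inequality. The infimum is never computed, and the hypothesis $t\le S$ enters only through the $\partial_u$ comparison.
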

\begin{proof}
By the very definition of $B$, we have
\begin{equation}\label{partial}
\partial_u B(s;t,u)=(\alpha+1)t^p/\alpha\leq (\alpha+1)(s_1s_2\ldots s_m)^p/\alpha=\partial_u B(s;s_1s_2\ldots s_m,u),
\end{equation}
and hence it is enough to establish \eqref{monoto} for the largest value of $u$, i.e., for $u=cs^{-p/\underline{p}'}$. First we handle the right-hand side: we have
\begin{equation}\label{zero}
 B(s;s_1s_2\ldots s_m,cs^{-p/\underline{p}'})\leq 0.
\end{equation}
To see this, we pick $s^*_j=s_j(1+\alpha(1-p_j'))$, $j=1,\,\,2,\,\ldots,\,m$. Then
$$ Cs_1^*s_2^*\ldots s_m^*=Cs_1s_2\ldots s_m\cdot \prod_{j=1}^m(1+\alpha(1-p_j'))=s_1s_2\ldots s_m,$$
so by the very definition of $B$,
$$ B(s;s_1s_2\ldots s_m,cs^{-p/\underline{p}'})\leq \frac{\alpha+1}{\alpha}(s_1s_2\ldots s_m)^p\cdot cs^{-p/\underline{p}'} +(s^*)^{p/\underline{p}}\beta\left(\frac{s_1}{s_1^*},\frac{s_2}{s_2^*},\ldots, \frac{s_m}{s_m^*}\right).$$
However, by \eqref{defalfa} and \eqref{defCe} we compute that
$$ (s^*)^{p/\underline{p}}=s^{p/\underline{p}}\prod_{j=1}^m \frac{(1+\alpha(1-p_j'))^p}{(1+\alpha(1-p_j'))^{p/p_j'}}=s^{p/\underline{p}}\cdot \frac{c(\alpha+1)}{C^p}$$
and
\begin{align*}
 \beta\left(\frac{s_1}{s_1^*},\frac{s_2}{s_2^*},\ldots, \frac{s_m}{s_m^*}\right)&=pC^p\left[\sum_{j=1}^m \left(\frac{\alpha+1}{\alpha p_j'}-1\right)\frac{1}{1+\alpha(1-p_j')}-\frac{m}{\alpha}\right]\\
 &=\frac{pC^p}{\alpha}\left(\sum_{j=1}^m \frac{1}{p_j'}-m\right)=-\frac{C^p}{\alpha}.
\end{align*}
Combining the above observations, we obtain \eqref{zero}; thus, to complete the proof, it is enough to prove that
$ B(s;t,cs^{-p/\underline{p}'})\geq 0.$ By the definition of $B$, we need to show the estimate
$$ \frac{\alpha+1}{\alpha}t^p\cdot cs^{-p/\underline{p}'}+(s^*)^{p/\underline{p}}\beta\left(\frac{s_1}{s_1^*},\frac{s_2}{s_2^*},\ldots, \frac{s_m}{s_m^*}\right)\geq 0$$
for any sequence $s_1^*$, $s_2^*$, $\ldots$, $s_m^*$ of positive numbers satisfying $Cs_1^*s_2^*\ldots s_m^*=t$. Recall that by \eqref{defalfa},
$$ c(\alpha+1)=\prod_{j=1}^m (1+\alpha(1-p_j'))^{-p/p_j'}.$$
The substitution $r_j=(1+\alpha(1-p_j'))s_j/s_j^*$ transforms the desired bound into
$$ \frac{r^{-p/\underline{p}'}}{pm}+\sum_{j=1}^m \frac{r_j}{mp_j'}\geq 1,$$
which follows directly from Young's inequality.
\end{proof}

\begin{remark}
The inequality \eqref{zero} and the bound $\partial_u B\geq 0$ (see the formula for $B$) imply that for any $s_1,s_2,\ldots,s_m>0$ and any $ s^{-p/\underline{p}'}\leq u\leq cs^{-p/\underline{p}'}$ we have
\begin{equation}\label{init}
B(s_1,s_2,\ldots,s_m;s_1s_2\ldots s_m,u)\leq 0.
\end{equation}
\end{remark}

\medskip

\section{Proof of \eqref{mainin} and \eqref{Hardy}}
Throughout, $\vec{P}=(p_1,p_2,\ldots,p_m)$ is a vector of exponents belonging to $(1,\infty)$, the parameter $p$ is given by $\frac{1}p=\frac{1}{p_1}+\frac{1}{p_2}+\ldots+\frac{1}{p_m}$ and the constant $C_{\vec{P},c}$ comes from \eqref{defCe}. The Bellman function $B$ introduced above leads to the following result.
\begin{theorem}
Suppose that $(X,\mu)$ is a probability space with a tree structure $\mathcal{T}$.
Fix $c\geq 1$ and let $\vec{w}$ be a vector of weights on $X$, satisfying $[\vec{w}]_{A_{\vec{P}}}\leq c$. Then for any $R\in \mathcal{T}$, the vector $\vec{\sigma}=(\sigma_1,\sigma_2,\ldots,\sigma_m)$ of dual weights satisfies
\begin{equation}\label{testing}
 \int_R \mathcal{M}(\vec{\sigma}\chi_R)^p v_{\vec{w}}\mbox{d}\mu\leq C^p_{\vec{P},c}\int_R \prod_{j=1}^m \sigma_j^{p/p_j}\mbox{d}\mu .
\end{equation}
\end{theorem}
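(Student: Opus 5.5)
The plan is a standard Bellman-function induction along the tree, using Lemmas \ref{propB1} and \ref{propB2} together with the concavity of $B$ in $(s,u)$ for fixed $t$. For $Q\in\mathcal{T}$ I will use the variables
$$ s_j^Q=\langle \sigma_j\rangle_Q,\qquad u^Q=\langle v_{\vec w}\rangle_Q,\qquad t^Q=\max_{Q\subseteq Q'\subseteq R} \prod_{j=1}^m\langle\sigma_j\rangle_{Q'}. $$
So $t^Q$ is the running maximal function restricted to ancestors of $Q$ inside $R$. The $A_{\vec P}$ condition with constant $c$ gives $u^Q\le c\,(s^Q)^{-p/\underline{p}'}$. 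H\"older's inequality, applied to $1=\prod_j\big(w_j^{p/p_j}\sigma_j^{p/p_j'}\big)$ with exponents summing correctly, gives $u^Q\ge (s^Q)^{-p/\underline{p}'}$. Hence all points $(s^Q;t^Q,u^Q)$ lie in the domain where both lemmas apply. Normalizing by $\mu(R)$, I may assume $R=X$.

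The key step is a one-step monotonicity. I will show that for each $Q$,
$$ \mu(Q)B(s^Q;t^Q,u^Q)\ \ge\ \sum_{Q'\in C(Q)}\mu(Q')B(s^{Q'};t^{Q'},u^{Q'}). $$
I write $t^{Q'}=\max(t^Q,\prod_j s_j^{Q'})$. By concavity of $B(\cdot;t^Q,\cdot)$ and the identities $s^Q=\sum_{Q'}\frac{\mu(Q')}{\mu(Q)}s^{Q'}$ and $u^Q=\sum_{Q'}\frac{\mu(Q')}{\mu(Q)}u^{Q'}$, the left side is at least $\sum\mu(Q')B(s^{Q'};t^Q,u^{Q'})$. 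For each child with $\prod_j s^{Q'}_j\ge t^Q$, Lemma \ref{propB2} applies because $u^{Q'}\le c(s^{Q'})^{-p/\underline{p}'}$, and it replaces $t^Q$ by $\prod_j s_j^{Q'}=t^{Q'}$ without increasing $B$. For the other children, $t^{Q'}=t^Q$ and nothing changes. Iterating from $X$ down to level $n$ gives
$$ \sum_{Q\in\mathcal{T}^n}\mu(Q)B(s^Q;t^Q,u^Q)\le B(s^X;s^X_1\cdots s^X_m,u^X)\le 0, $$
where the last inequality is \eqref{init}.

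Next I apply Lemma \ref{propB1} at level $n$. This yields
$$ \sum_{Q\in\mathcal{T}^n}\mu(Q)(t^Q)^p(s^Q)^{-p/\underline{p}'}\le C^p\sum_{Q\in\mathcal{T}^n}\mu(Q)(s^Q)^{p/\underline{p}}. $$
The main obstacle is passing to the limit $n\to\infty$ so that this becomes \eqref{testing}. Let $\mathcal{M}_n$ denote the truncation of the maximal function to levels $\le n$; then $t^Q=\mathcal{M}_n(\vec\sigma)$ on $Q\in\mathcal{T}^n$. The natural quantity on the left would be $\int\mathcal{M}_n(\vec\sigma)^p v_{\vec w}$, which equals $\sum\mu(Q)(t^Q)^pu^Q$. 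But the bound above has $(s^Q)^{-p/\underline{p}'}$ in place of $u^Q$, and $(s^Q)^{-p/\underline{p}'}\le u^Q$ points the wrong way. I would try to fix this first by redoing the iteration while keeping the $u$-variable exact. The point is that $B$ contains the term $\frac{\alpha+1}{\alpha}t^pu$ and the infimum part does not depend on $u$, so I can bound $B(s;t,u)$ from below by $B(s;t,s^{-p/\underline{p}'})+\frac{\alpha+1}{\alpha}t^p\big(u-s^{-p/\underline{p}'}\big)$. Applying Lemma \ref{propB1} then gives $B\ge t^pu-C^ps^{p/\underline{p}}$, since $\frac{\alpha+1}{\alpha}\ge1$ and $u\ge s^{-p/\underline{p}'}$.

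With that corrected, the level-$n$ estimate reads $\int\mathcal{M}_n(\vec\sigma)^pv_{\vec w}\le C^p\sum_{Q\in\mathcal{T}^n}\mu(Q)\prod_j\langle\sigma_j\rangle_Q^{p/p_j}$. By H\"older's inequality, $\prod_j\langle\sigma_j\rangle_Q^{p/p_j}\ge\langle\prod_j\sigma_j^{p/p_j}\rangle_Q$, which again points the wrong way for bounding the right side by $\int\prod_j\sigma_j^{p/p_j}$. I will instead use condition (iv) of the tree definition together with Lebesgue differentiation (martingale convergence): the averages $\langle\sigma_j\rangle_Q$ converge to $\sigma_j$ a.e. as $n\to\infty$. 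I then need uniform integrability of $\prod_j\langle\sigma_j\rangle_Q^{p/p_j}$. This follows from H\"older's inequality combined with Doob's bound for the martingales $\langle\sigma_j\rangle$ when the $\sigma_j$ are bounded; I will first truncate the weights, which keeps the $A_{\vec P}$ constant controlled, or alternatively use Fatou on the left and dominated convergence on the right. Finally, monotone convergence $\mathcal{M}_n\uparrow\mathcal{M}$ on the left gives \eqref{testing}.
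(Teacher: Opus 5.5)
Your argument is the paper's proof: the same Bellman induction along the tree (concavity of $B(\cdot;t,\cdot)$ in $(s,u)$, Lemma~\ref{propB2} on the children where the running maximum jumps, the starting bound \eqref{init}, and Lemma~\ref{propB1} at level $n$).

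The obstacle you see on the left side does not actually arise. Since $w_j^{p/p_j}\sigma_j^{p/p_j'}=1$, one has $\prod_j\sigma_j^{-p/p_j'}=v_{\vec w}$ pointwise. So $x_n^{-p/\underline{p}'}\to v_{\vec w}$ a.e., and the paper simply applies Fatou's lemma to $y_n^p x_n^{-p/\underline{p}'}$. Your fix is nevertheless correct. $B$ is affine in $u$ with slope $\frac{\alpha+1}{\alpha}t^p\ge t^p$, so $B(s;t,u)\ge t^pu-C^ps^{p/\underline{p}}$ for $u\ge s^{-p/\underline{p}'}$. This gives the slightly stronger level-$n$ bound $\int_R\mathcal{M}_n(\vec\sigma\chi_R)^pv_{\vec w}\,\mbox{d}\mu\le C^p\int_R\prod_j x_{nj}^{p/p_j}\,\mbox{d}\mu$, and lets you use monotone convergence on the left.

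The right-hand limit, however, is not settled as written. Truncating the weights is not a safe move here, because changing one $w_j$ moves $v_{\vec w}$ and $\sigma_j$ in competing directions, and the multilinear characteristic of a node can increase. For example, take $m=2$, $p_1=p_2=2$, and a node $Q$ with two children of equal measure on which $(w_1,w_2)$ equals $(100,10^{-4})$ and $(1,1)$. Replacing $w_1$ by $\min(w_1,1)$ raises $\langle v_{\vec w}\rangle_Q\langle\sigma_1\rangle_Q^{1/2}\langle\sigma_2\rangle_Q^{1/2}$ from about $27.6$ to about $35.7$. A characteristic that is only `controlled' up to a factor destroys the sharp constant $C_{\vec P,c}$, and you would also need a second limit in the truncation level. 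Your other option, dominated convergence, needs a majorant you have not named.

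No truncation is necessary. For any measurable $E\subseteq R$, H\"older's inequality with exponents $p_j/p$ gives $\int_E\prod_j x_{nj}^{p/p_j}\,\mbox{d}\mu\le\prod_j\big(\int_E x_{nj}\,\mbox{d}\mu\big)^{p/p_j}$. Each $(x_{nj})_n$ is uniformly integrable, being a family of conditional expectations of $\sigma_j\in L^1$. Hence $\big(\prod_j x_{nj}^{p/p_j}\big)_n$ is uniformly integrable, and together with the a.e.\ convergence, Vitali's theorem gives $\int_R\prod_j x_{nj}^{p/p_j}\,\mbox{d}\mu\to\int_R\prod_j\sigma_j^{p/p_j}\,\mbox{d}\mu$. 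This needs only $\sigma_j\in L^1$. The paper instead uses the majorant $\prod_j(\sup_n x_{nj})^{p/p_j}$, whose integrability it derives from $\sigma_j\in A_{mp_j'}$ and higher integrability.
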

\begin{proof}
It is convenient to split the reasoning into a few intermediate parts.

\smallskip

\emph{Step 1. An auxiliary notation.}
Fix $w$ and $R$ as in the statement and let $n_0$ be the unique integer such that $R\in \mathcal{T}^{n_0}$. We introduce the auxiliary functional sequences $(x_n)_{n\geq n_0}$, $(y_n)_{n\geq n_0}$ and $(z_n)_{n\geq n_0}$ as follows: for any $n\geq n_0$ and $\omega\in R$,
\begin{align*}
 x_n(\omega)&=(x_{n1}(\omega),x_{n2}(\omega),\ldots,x_{nm}(\omega))=\big(\langle \sigma_1\rangle_{Q_n(\omega)},\langle \sigma_2\rangle_{Q_n(\omega)},\ldots,\langle \sigma_m\rangle_{Q_n(\omega)}\big)\\
 y_n(\omega)&=\max_{n_0\leq k\leq n}\Big(x_{k1}(\omega)x_{k2}(\omega)\ldots x_{km}(\omega)\Big),\\
z_n(\omega)&=\langle v_{\vec{w}}\rangle_{Q_n(\omega)},
\end{align*}
where $Q_n(\omega)$ is the unique element of $\mathcal{T}^n$ which contains $\omega$. There is a nice probabilistic interpretation of these sequences: $(x_n)_{n\geq n_0}$ and $(z_n)_{n\geq n_0}$ are martingales induced by the filtration $(\mathcal{T}^n)_{n\geq n_0}$ , with the terminal variables equal to $(\sigma_1,\sigma_2,\ldots,\sigma_m)$ and $v_{\vec{w}}$, respectively; in addition, $(y_n)_{n\geq n_0}$ is the maximal process associated with the sequence of products $(x_{k1}x_{k2}\ldots x_{km})_{k\geq n_0}$. Observe that by Lebesgue's differentiation theorem (or rather, by Doob's martingale convergence theorem), we have $x_{nj}\to \sigma_j$, $y_n\to \mathcal{M}(\vec{\sigma}\chi_R)$ and $z_n\to v_{\vec{w}}$ almost surely as $n\to \infty$.

\smallskip

\emph{Step 2. Monotonicity.} Now we will show that the sequences $(x_n)_{n\geq n_0}$, $(y_n)_{n\geq n_0}$ and $(z_n)_{n\geq n_0}$ combine nicely with the Bellman function $B$ defined previously. More specifically, we will prove that the sequence
$$ \left(\int_R B(x_n;y_n,z_n)\mbox{d}\mu\right)_{n\geq n_0}$$
is nonincreasing. To see this, fix an integer $n\geq n_0$, let $Q$ be an arbitrary element of $\mathcal{T}^n$ and let $Q_1$, $Q_2$, $\ldots$, $Q_\ell$ be the collection of all children of $Q$ in $\mathcal{T}^{n+1}$. The functions $x_n$, $y_n$, $z_n$ are constant on $Q$, while $x_{n+1}$, $y_{n+1}$ and $z_{n+1}$  are constant on each $Q_j$. It is easy to check that these constant values satisfy
\begin{equation}\label{constan}
 \langle x_n\rangle_Q=\sum_{j=1}^\ell \frac{|Q_j|}{|Q|} \langle x_{n+1}\rangle_{Q_j},\qquad \langle z_n\rangle_Q =\sum_{j=1}^\ell \frac{|Q_j|}{|Q|}\langle z_{n+1}\rangle_{Q_{j}}.
\end{equation}
(This is nothing but the martingale property, expressed in analytic terms).  Next, observe that for any $j$ we have
\begin{equation}\label{interm}
 \int_{Q_j} B(x_{n+1};y_{n+1},z_{n+1})\mbox{d}\mu\leq \int_{Q_j} B(x_{n+1};y_n,z_{n+1})\mbox{d}\mu.
\end{equation}
Indeed, if $y_{n+1}=y_n$ on $Q_j$, then there is nothing to prove; on the other hand, if $y_{n+1}\neq y_n$ on $Q_j$, then by the definition of the sequence $y$ we must have $y_n<y_{n+1}=x_{(n+1)1}x_{(n+1)2}\ldots x_{(n+1)m}$. Furthermore, by the $A_p$ condition we have
$$ z_{n+1}(\omega)=\langle v_{\vec{w}}\rangle_{Q_{n+1}(\omega)}\leq c\prod_{j=1}^m \langle \sigma_j\rangle_{Q_{n+1}(\omega)}^{-p/p_j'}=c x_{(n+1)1}^{-p/p_1'}(\omega)x_{(n+1)2}^{-p/p_2'}(\omega)\ldots x_{(n+1)m}^{-p/p_m'}(\omega). $$
By Lemma \ref{propB2}, this implies $B(x_{n+1};y_{n+1},z_{n+1})\leq  B(x_{n+1};y_n,z_{n+1})$ on $Q_j$ and hence \eqref{interm} follows. Summing over $j$, we thus obtain
$$  \int_{Q} B(x_{n+1};y_{n+1},z_{n+1})\mbox{d}\mu\leq \int_{Q} B(x_{n+1};y_n,z_{n+1})\mbox{d}\mu.$$
Now, for a fixed $t$, the function $(s,u)\to B(s;t,u)$, being the infimum of affine functions, is concave. Therefore, by \eqref{constan}, the right-hand side above does not exceed  $ \int_{Q} B(x_{n};y_n,z_{n})\mbox{d}\mu.$
Summing over all $Q\in \mathcal{T}^n$ contained in $R$, we obtain the aforementioned monotonicity.

\smallskip

\emph{Step 3. Completion of the proof.} By the previous step, we obtain that for any $n\geq n_0$ we have
\begin{equation}\label{land}
 \int_R B(x_{n};y_n,z_n)\mbox{d}\mu\leq \int_R B(x_{n_0};y_{n_0},z_{n_0})\mbox{d}\mu.
\end{equation}
Let us inspect the expression on the right. By the very definition of $y$, we have $y_{n_0}=x_{n_01}x_{n_02}\ldots x_{n_0m}$; furthermore,  by the $A_{\vec{P}}$ condition, we obtain $z_{n_0}\leq c\prod_{j=1}^m x_{n_0j}^{-p/p_j'}$. Consequently, we have $
 B(x_{n_0};y_{n_0},z_{n_0})\leq 0$, by virtue of \eqref{init}.
The left-hand side of \eqref{land} is handled by Lemma \ref{propB1}: we have
$$ B(x_{n};y_n,z_n)\geq y_n^px_{n1}^{-p/p_1'}x_{n2}^{-p/p_2'}\ldots x_{nm}^{-p/p_m'}-C^px_{n1}^{p/p_1}x_{n2}^{p/p_2}\ldots x_{nm}^{p/p_m}.$$Putting all these fact together, we get
$$ \int_R y_n^px_{n1}^{-p/p_1'}x_{n2}^{-p/p_2'}\ldots x_{nm}^{-p/p_m'}\mbox{d}\mu\leq C^p \int_R \prod_{j=1}^m x_{nj}^{p/p_j}\mbox{d}\mu.$$
It remains to let $n\to \infty$ and carry out an appropriate limiting procedure. Recall that we have the almost sure convergence $x_{nj}\to \sigma_j$ and $z_n\to v_{\vec{w}}$. Therefore, the left-hand side above can be handled by Fatou's lemma:
$$ \int_R \mathcal{M}(\vec{\sigma}\chi_R)^p v_{\vec{w}}\mbox{d}\mu\leq \liminf_{n\to\infty} \int_R y_n^px_{n1}^{-p/p_1'}x_{n2}^{-p/p_2'}\ldots x_{nm}^{-p/p_m'}\mbox{d}\mu.$$
To deal with the right hand side, we will apply Lebesgue's dominated convergence theorem: we have $\prod_{j=1}^m x_{nj}^{p/p_j}\to \prod_{j=1}^m \sigma_j^{p/p_j}$ almost surely, so all we need is a suitable majorant. The inclusion $\vec{w}\in A_{\vec{P}}$ implies $\sigma_j\in A_{mp_j'}$ (cf. \cite{LOPTT}), which gives that $\sigma_j$, and hence also the maximal function $M\sigma_j$, belongs to $L^r$ for some $r>1$. Thus, by the H\"older inequality, the majorant $\prod_{j=1}^m \Big(\sup_{n\geq n_0}x_{nj}\Big)^{p/p_j}$ is integrable and the assertion follows.
\end{proof}

Now, recall the following multilinear version of Carleson embedding theorem established in \cite{CD} (see also \cite{HP}). Here and below, we use the notation $\langle \psi \rangle_{Q,\sigma_j}$ for the average of $\psi$ over $Q$ with respect to the measure $\sigma_j\mbox{d}\mu$.

\begin{theorem}\label{multilinear-carleson}
Suppose that a sequence $(a_Q)_{Q\in \mathcal{T}}$ of nonnegative numbers satisfies the following condition: for any $R\in \mathcal{T}$ we have the estimate
\begin{equation*}\label{Carle}
 \sum_{Q\subseteq R} a_Q \leq K^p\int_R \prod_{j=1}^m \sigma_j^{p/p_j}\mbox{d}\mu.
\end{equation*}
Then for any vector $\vec{f}=(f_1,f_2,\ldots,f_m)$ such that $f_j\in L^{p_j}(\sigma_j)$ for all $j$, we have
\begin{equation*}\label{innu}
 \left(\sum_{Q\in \mathcal{T}} a_Q \prod_{j=1}^m \langle f_j\rangle_{Q,\sigma_j}^p\right)^{1/p}\leq K\prod_{j=1}^m p_j'\|f_j\|_{L^{p_j}(\sigma_j)},
\end{equation*}
where the notation $\langle \psi \rangle_{Q,\sigma_j}$ denotes the average of $\psi$ over $Q$ with respect to $\sigma_j\mbox{d}\mu$.
\end{theorem}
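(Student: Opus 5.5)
We may assume $f_j\geq 0$ for every $j$ (replace $f_j$ by $|f_j|$, which leaves the right-hand side unchanged and can only increase the left-hand side). We may also assume $K<\infty$. Put $F(Q)=\prod_{j=1}^m \langle f_j\rangle_{Q,\sigma_j}$ for $Q\in\mathcal{T}$, and let $\nu$ denote the measure $\prod_{j=1}^m\sigma_j^{p/p_j}\,\mbox{d}\mu$.

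The plan is to reduce the sum to an integral of a maximal function against $\nu$, using the Carleson hypothesis on layers. The tool for this step is the layer-cake formula
$$\sum_{Q\in\mathcal{T}} a_Q F(Q)^p=\int_0^\infty p\lambda^{p-1}\sum_{Q:\,F(Q)>\lambda} a_Q\,\mbox{d}\lambda .$$
Fix $\lambda>0$ and let $\mathcal{R}_\lambda$ be the family of maximal (with respect to inclusion) elements of $\{Q\in\mathcal{T}: F(Q)>\lambda\}$. Maximal elements exist because each $Q\in\mathcal{T}^n$ has only finitely many ancestors, namely its chain of parents up to $X$. Two elements of the tree are either nested or disjoint, so the sets in $\mathcal{R}_\lambda$ are pairwise disjoint, and every $Q$ with $F(Q)>\lambda$ lies in exactly one $R\in\mathcal{R}_\lambda$. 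Applying the Carleson hypothesis to each $R$ gives
$$\sum_{Q:\,F(Q)>\lambda}a_Q\leq \sum_{R\in\mathcal{R}_\lambda}\sum_{Q\subseteq R}a_Q\leq K^p\sum_{R\in\mathcal{R}_\lambda}\nu(R)=K^p\nu\Big(\bigcup\mathcal{R}_\lambda\Big)\leq K^p\nu(\{\widetilde{\mathcal{M}}\vec f>\lambda\}).$$
Here $\widetilde{\mathcal{M}}\vec f=\sup_{Q\in\mathcal{T}}F(Q)\chi_Q$. Integrating in $\lambda$ yields
$$\sum_{Q\in\mathcal{T}} a_Q F(Q)^p\leq K^p\int_X (\widetilde{\mathcal{M}}\vec f)^p\,\mbox{d}\nu .$$

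Next, since $\widetilde{\mathcal{M}}\vec f\leq \prod_{j=1}^m M_{\sigma_j}f_j$ pointwise, where $M_{\sigma_j}f=\sup_{Q\in\mathcal{T}}\langle f\rangle_{Q,\sigma_j}\chi_Q$, we can bound the right-hand side by
$$K^p\int_X\prod_{j=1}^m (M_{\sigma_j}f_j)^p\sigma_j^{p/p_j}\,\mbox{d}\mu\leq K^p\prod_{j=1}^m\Big(\int_X (M_{\sigma_j}f_j)^{p_j}\sigma_j\,\mbox{d}\mu\Big)^{p/p_j}.$$
This is H\"older's inequality with exponents $p_j/p$, whose reciprocals sum to $1$.

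It remains to show $\|M_{\sigma_j}f_j\|_{L^{p_j}(\sigma_j)}\leq p_j'\|f_j\|_{L^{p_j}(\sigma_j)}$; taking $p$-th roots then completes the proof. This is Doob's maximal inequality in the following form. The weight $\sigma_j$ is integrable (it belongs to $L^1(\mu)$ in our setting), so $\sigma_j\,\mbox{d}\mu$ normalized is a probability measure. The sequence $n\mapsto\langle f_j\rangle_{Q_n(\omega),\sigma_j}$ is then the martingale $\mathbb{E}_{\sigma_j}(f_j\mid\mathcal{F}_n)$, where $\mathcal{F}_n$ is the $\sigma$-field generated by $\mathcal{T}^n$ (finite partitions). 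Doob's $L^{p_j}$ inequality with its sharp constant $p_j'$ gives the bound; the normalization cancels because the inequality is homogeneous in the measure.

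I expect the only delicate point to be the stopping-time (maximal cube) decomposition: one must check that maximal elements exist and are disjoint, so that the Carleson hypothesis can be summed over $\mathcal{R}_\lambda$ without overcounting. Both properties follow from the nested/disjoint structure and the finiteness of ancestor chains in the tree. Everything else is the standard layer-cake, H\"older and Doob chain.
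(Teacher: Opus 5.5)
Your proof is correct: the layer-cake formula, the maximal stopping elements for the Carleson hypothesis, the pointwise bound $\sup_Q \prod_j \langle f_j\rangle_{Q,\sigma_j}\chi_Q\le\prod_j M_{\sigma_j}f_j$, H\"older with exponents $p_j/p$, and Doob's inequality with constant $p_j'$ all hold, and you correctly verified the tree facts you need (nested-or-disjoint elements, finite ancestor chains). The paper does not prove this theorem itself but quotes it from \cite{CD} (see also \cite{HP}), and your argument is essentially the standard proof given there, carried over properly to the abstract tree setting.
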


We are ready for the proof of our main estimate stated in Theorem \ref{mainthm}.

\begin{proof}[Proof of \eqref{mainin}] Fix a vector $\vec{w}\in A_{\vec{P}}(X,\mathcal{T})$ and pick any sequence $\vec{f}=(f_1,f_2,\ldots,f_m)$ of functions on $X$ such that $f_j\in L^{p_j}(w_j)$ for each $j$. Clearly, we may assume that $f_j>0$ for all $j$. Let $\e>0$ be a fixed parameter. %By a straightforward approximation, we may assume that there is an integer $L$ such that $\vec{f}$ is measurable with respect to the $\sigma$-algebra generated by $\mathcal{T}^L$ (that is, $\vec{f}$ is constant on each atom of $\mathcal{T}^L$). Under this simplicity assumption, we see that the suprema defining the multilinear maximal functions are attained:
By the definition of the multilinear maximal operator, for any $\omega\in X$ there is a set $\mathcal{Q}(\omega)$ containing $\omega$ such that
$$ \mathcal{M}\vec{f}(\omega)\leq (1+\e)\langle f_1\rangle_{\mathcal{Q}(\omega)}\langle f_2\rangle_{\mathcal Q(\omega)}\ldots \langle f_m\rangle_{\mathcal Q(\omega)}.$$
Of course, such a set $\mathcal Q(\omega)$ need not be unique: to avoid ambiguity, we take $\mathcal{Q}(\omega)$ belonging to $\mathcal{T}^n$ with $n$ as small as possible. Now, for any $Q\in \mathcal{T}$, we define $ E(Q)=\{\omega\in X\,:\,\mathcal{Q}(\omega)=Q\}$
and let
$$a_Q=\prod_{j=1}^m \langle \sigma_j\rangle_Q^p\cdot v_{\vec{w}}(E(Q)).$$
By the very definition, we have $E(Q)\subseteq Q$ and the sets $E(Q)$ corresponding to different $Q$'s are disjoint. Furthermore, we have $a_Q\leq (\mathcal{M}(\vec{\sigma}\chi_Q)(\omega))^pv_{\vec{w}}(E(Q))$ for $\omega\in Q$.
Consequently, for any $R\in \mathcal{T}$ we get
$$ \sum_{Q\subseteq  R}a_Q\leq \int_R (\mathcal{M}(\vec{\sigma}\chi_Q))^pv_{\vec{w}}\mbox{d}\mu\leq C^p\int_R \prod_{j=1}^m \sigma_j^{p/p_j}\mbox{d}\mu,$$
where the second inequality is due to \eqref{testing}.  Thus, by the Carleson embedding theorem (i.e. Theorem \ref{multilinear-carleson}) applied to the vector $\overrightarrow{f\sigma^{-1}}=(f_1\sigma_1^{-1},f_2\sigma_2^{-1},\ldots,f_m\sigma_m^{-1})$, we obtain
$$ \left(\sum_{Q\in \mathcal{T}} a_Q \prod_{j=1}^m \langle f_j\sigma_j^{-1}\rangle_{Q,\sigma_j}^p\right)^{1/p}\leq C\prod_{j=1}^m p_j'\|f_j\sigma_j^{-1}\|_{L^{p_j}(\sigma_j)}=C\prod_{j=1}^m p_j'\|f_j\|_{L^{p_j}(w_j)}.$$
Now, by the definition of $a_Q$ and the identity $\langle f_j\sigma_j^{-1}\rangle_{Q,\sigma_j}=\langle f_j\rangle_Q\langle \sigma_j\rangle_Q^{-1}$, we get
\begin{align*}
\sum_{Q\in \mathcal{T}} a_Q \prod_{j=1}^m \langle f_j\sigma_j^{-1}\rangle_{Q,\sigma_j}^p&=\sum_{Q\in \mathcal{T}}v_{\vec{w}}(E(Q))\prod_{j=1}^m \langle f_j\rangle_Q^p\geq (1+\e)^{-1}\int_X (\mathcal{M}\vec{f})^pv_{\vec{w}}\mbox{d}\mu.
\end{align*}
Combining the two estimates above yields the claim, since $\e$ was arbitrary.
\end{proof}

We now prove the multilinear Hardy inequality formulated in Theorem \ref{Hardythm}.

\begin{proof}[Proof of \eqref{Hardy}] It is enough to show the localized estimate
\begin{equation}\label{local}
 \left(\int_0^T \prod_{j=1}^m \left(\int_0^t |f_j(s)|\mbox{d}s\right)^p\frac{v_{\vec{w}}(t)}{t^{mp}}\mbox{d}t \right)^{1/p} \leq \prod_{j=1}^m \frac{p_j\|f_j\|_{L^{p_j}([0,T],w_j)}}{p_j-1-\alpha(p,[\vec{w}]_{A_{\vec{P}}})}
 \end{equation}
for any $T>0$, and we may restrict ourselves to nonnegative functions $f_j$. This inequality will follow from the estimate \eqref{mainin}, thanks to a discretization argument. Fix $\vec{w}$, $\vec{f}$ as in the statement, let $\delta\in (0,1)$ be a small number and distinguish the points $t_n=(1-\delta)^nT$, $n=0,\,1,\,2,\,\ldots$. Consider the space $X=(0,T]$ equipped with the normalized Lebesgue measure $\mu=|\cdot|/T$. Introduce the tree $\mathcal{T}$ by the following inductive procedure:

\smallskip

(i) We let $\mathcal{T}^0=\{(0,T]\}$;

(ii) If $\mathcal{T}^j=\{(0,t_j],I_1,I_2,\ldots,I_k\}$ for some subintervals $I_1,\,I_2,\,\ldots,\, I_k$, then
$$ \mathcal{T}^{j+1}=\{(0,t_{j+1}],(t_{j+1},t_j],I_{1-},I_{1+},I_{2-},I_{2+},\ldots,I_{k-},I_{k+}\}.$$

\smallskip

\noindent Here $I_-$ and $I_+$ denote the left and the right half of $I$, respectively.
Next, let $\tilde{\sigma}_j,\,\tilde{f_j}:X\to (0,\infty)$ be the conditional expectations of $\sigma_j=w_j^{1-p_j'}$ and $f_j$ with respect to the $\sigma$-algebra generated by $(t_1,t_0]$, $(t_2,t_1]$, $\ldots$, i.e.,
\begin{equation}\label{condexp}
 \tilde{\sigma}_j=\sum_{n=1}^\infty\langle \sigma_j\rangle_{(t_n,t_{n-1}]}\chi_{(t_n,t_{n-1}]},\quad \quad \tilde{f}_{j}=\sum_{n=1}^\infty\langle f_j\rangle_{(t_{n},t_{n-1}]}\chi_{(t_{n},t_{n-1}]}.
\end{equation}
Finally, set $\tilde{w}_j=\tilde \sigma_j^{1-p_j}$. Then the vector weight $\vec{\tilde{w}}=(\tilde{w}_1,\tilde{w}_2,\ldots,\tilde{w}_m)$ satisfies the $A_{\vec{P}}$ condition with respect to the above tree $\mathcal{T}$ and $[\vec{\tilde{w}}]_{A_{\vec{P}}}\leq [\vec{w}]_{A_{\vec{P}}}$. To see this, fix an arbitrary element $Q$ of $\mathcal{T}$. There are two possibilities: either we have $Q=(0,t_n]$ for some $n$, or $Q$ is entirely contained in some interval $(t_{n-1},t_n]$. If the second possibility occurs, then $\vec{\tilde{w}}$ is constant on $Q$ and hence
$$ \langle v_{\vec{\tilde{w}}}\rangle_Q\prod_{j=1}^m \langle \tilde{w}^{1-p_j'}_j\rangle_Q^{p/p_j'}=1.$$
In the case $Q=(0,t_n]$ for some $n$, we note that for any $k\geq 1$ we have
$$  \langle v_{\vec{\tilde{w}}}\rangle_{(t_{k},t_{k-1}]}=\prod_{j=1}^m \langle \tilde{w}^{1-p_j'}_j\rangle_{(t_{k},t_{k-1}]}^{-p/p_j'}=\prod_{j=1}^m \langle \tilde{\sigma}_j\rangle_{(t_{k},t_{k-1}]}^{-p/p_j'}=\prod_{j=1}^m \langle {\sigma}_j\rangle_{(t_{k},t_{k-1}]}^{-p/p_j'}\leq \langle v_{\vec{w}}\rangle_{(t_{k},t_{k-1}]},$$
where the last estimate follows from H\"older's inequality. Summing over all $k\geq n+1$, we obtain $\langle v_{\vec{\tilde{w}}}\rangle_Q\leq \langle v_{\vec{w}}\rangle_Q$ and hence
$$ \langle v_{\vec{\tilde{w}}}\rangle_Q\prod_{j=1}^m \langle \tilde{w}^{1-p_j'}_j\rangle_Q^{p/p_j'}\leq
 \langle v_{\vec{w}}\rangle_Q\prod_{j=1}^m \langle \tilde{\sigma}_j\rangle_Q^{p/p_j'}= \langle v_{\vec{w}}\rangle_Q\prod_{j=1}^m \langle {\sigma}_j\rangle_Q^{p/p_j'}\leq [\vec{w}]_{A_{\vec{P}}}.$$
This establishes the $A_{\vec{P}}$ property and thus we may apply the estimate \eqref{mainin} to $\vec{\tilde{f}}$ and $\vec{\tilde{w}}$.
Note that by the very definition of the multilinear maximal operator, if $t\in (t_{n},t_{n-1}]$, then automatically $t\in (0,t_{n-1}]\in \mathcal{T}$ and hence
$$ \mathcal{M}\vec{\tilde{f}}(t)\geq \prod_{j=1}^m \left(\frac{1}{\mu((0,t_{n-1}])}\int_{(0,t_{n-1}]} \tilde{f_j}\mbox{d}\mu\right)=\frac{1}{t_{n-1}^m}\prod_{j=1}^m \left(\int_0^{t_{n-1}} f_j(s)\mbox{d}s\right).$$
Furthermore, by the convexity of the function $(a,b)\mapsto a^{p_j}b^{1-p_j}$ on $[0,\infty)^2$, we get
$$ \|\tilde{f}_j\|_{L^{p_j}(\tilde{w}_j)}^{p_j}=\int_X \tilde{f}_j^{p_j}\tilde{\sigma}_j^{1-p_j}\mbox{d}\mu\leq \int_X f_j\sigma_j^{1-p_j}\mbox{d}\mu=\frac{1}{T}\|f_j\|_{L^{p_j}([0,T],w_j)}^{p_j}.$$
Therefore, the estimate \eqref{mainin} yields
\begin{align*}
 \left\{\sum_{n=1}^\infty \left[\frac{1}{t_{n-1}^m}\prod_{j=1}^m \left(\int_0^{t_{n-1}}\!\! f_j\right)\right]^p \!\!\!\langle v_{\vec{\tilde{w}}}\rangle_{(t_{n},t_{n-1}]} (t_{n-1}-t_{n})\right\}^{1/p}\!\!\!\!\leq
 \prod_{j=1}^m \frac{p_j\|f_j\|_{L^{p_j}([0,T],\tilde{w}_j)}}{p_j-1-\alpha(p,[\vec{w}]_{A_{\vec{P}}})}
 \end{align*}
 and letting $\delta\to 0$ gives \eqref{local}, by Fatou's lemma.
\end{proof}
\medskip

\section{On the lower bound for the constant}
We have decided to split the contents of this section into two separate parts. In the first part, we construct an explicit example which shows that the constant in the Hardy inequality \eqref{Hardy} is optimal. The calculations for this special case serve as the motivation for the extremal functions for \eqref{mainin}. This general case will be studied in the second part.

\subsection{Sharpness of \eqref{Hardy}} It is instructive to start with the classical Hardy bound
$$ \int_0^\infty \left(\frac{1}{t}\int_0^t |f|\right)^p\mbox{d}t\leq \left(\frac{p}{p-1}\right)^p\int_0^\infty |f|^p,\qquad 1<p<\infty.$$
The sharpness of this estimate is obtained by testing over truncated power functions: for a given $\e\in (0,1)$, let
$$ f^\e(t)=\frac{p}{p-1}\e^{-1/p}\chi_{(0,\e)}(t)+t^{-1/p}\chi_{(\e,\e^{-1})}(t).$$
Then we compute directly that
\begin{align*}
 \int_0^\infty \left(\frac{1}{t}\int_0^t |f^\e|\right)^p\mbox{d}t&\geq \int_\e^{\e^{-1}} \left(\frac{1}{t}\int_0^t |f^\e|\right)^p\mbox{d}t\\
&=\left(\frac{p}{p-1}\right)^p\int_\e^{\e^{-1}} t^{-1}\mbox{d}t=-2\left(\frac{p}{p-1}\right)^p\log \e.
\end{align*}
On the other hand,
$$ \int_0^\infty |f^\e|^p=\left(\frac{p}{p-1}\right)^p-2\log \e,$$
and hence letting $\e\to 0$ shows that the constant $(p/(p-1))^p$ is optimal. Importantly, that the limiting extremal function $\lim_{\e\downarrow 0}f^\e$ does not lie in $L^p$: the integrals on the left and on the right above diverge (with the same, logarithmic rate).

Motivated by \eqref{Hardy}, it is natural to study the sharpness of \eqref{Hardy} with power functions. Fix $\vec{P}$ and $c\in [1,\infty)$, and consider the weights
\begin{equation}\label{weiHardy}
w_1(t)=w_2(t)=\ldots=w_m(t)=t^\alpha,
\end{equation}
where $\alpha=\alpha(\vec{P},c)$ is given by \eqref{defalfa}. Then $v_{\vec{w}}=w_1(t)$ and we check directly that for any $t>0$,
$$ \langle v_{\vec{w}}\rangle_{[0,t]}\prod_{j=1}^m \langle w_j^{1-p_j'}\rangle_{[0,t]}^{p/p_j'}=\left(1+\alpha\right)^{-1}\prod_{j=1}^m \left(1+\alpha(1-p_j')\right)^{-p/p_j'}=c,$$
where the last equality is precisely \eqref{defalfa}. Therefore, $\vec{w}$ is an $A_{\vec{P}}$ weight and we have $[\vec{w}]_{A_{\vec{P}}}=c$.
To introduce $\vec{f}$, we proceed as in the above classical Hardy inequality and pick $f_j$ to be a truncated power function, for which the norm $\|f_j\|_{L^{p_j}(w_j)}$ explodes in the limit. Precisely, for any $\e\in (0,1)$, we take
\begin{equation}\label{funHardy}
 f_j^\e(t)=\frac{p_j}{p_j-1-\alpha}\e^{-(\alpha+1)/p_j}\chi_{(0,\e)}(t)+  t^{-(\alpha+1)/p_j}\chi_{(\e,\e^{-1})}(t).
\end{equation}
The choice of the somewhat complicated expression on $(0,\e)$ is motivated by the following property: for $t\in (\e,\e^{-1})$ we have the simple formula
$$ \frac{1}{t}\int_0^t f^\e_j=\frac{p_jt^{-(\alpha+1)/p_j}}{p_j-1-\alpha}.$$
 Now we proceed as in the above classical case, deriving that
 \begin{align*}
 \int_0^\infty \prod_{j=1}^m \left(\frac{1}{t}\int_0^t f_j^\e\right)^pv_{\vec{w}}(t)\mbox{d}t&\geq \int_\e^{\e^{-1}} \prod_{j=1}^m \left(\frac{1}{t}\int_0^t f_j^\e\right)^pv_{\vec{w}}(t)\mbox{d}t\\
 &=\prod_{j=1}^m \left(\frac{p_j}{p_j-1-\alpha}\right)^p\int_{\e}^{\e^{-1}}t^{-1}\mbox{d}t\\
 &=-2\prod_{j=1}^m \left(\frac{p_j}{p_j-1-\alpha}\right)^p\log \e
 \end{align*}
 and
 $$ \prod_{j=1}^m\|f_j^\e\|_{L^{p_j}(w_j)}=\prod_{j=1}^m\left[\frac{1}{1+\alpha}\left(\frac{p_j}{p_j-1-\alpha}\right)^{p_j}-2\log \e\right]^{1/p_j}.$$
 Letting $\e\to 0$, we see that the best constant in \eqref{Hardy}, under the restriction $[\vec{w}]_{A_{\vec{P}}}=c$, is at least $\prod_{j=1}^m \left(\frac{p_j}{p_j-1-\alpha}\right)$. This is precisely the desired sharpness.

\smallskip

Of course, the above example immediately shows that the constant in the inequality \eqref{mainin} cannot be improved in general. However, the key fact, to be proved in the subsection below, is that this sharpness holds for an \emph{arbitrary} probability space. %Here is the idea of the construction in the general setting. Fix $\e>0$ and $c\geq 1$. The discretization procedure used in the proof of the estimate \eqref{Hardy} (see the end of Section 3) combined with the above example immediately gives us a probability space and an $A_{\vec{P}}$ weight such that \eqref{rever} holds. Now, in the general case, we embed the weight and the extremal functions into this special context.

\subsection{Sharpness of \eqref{mainin} for general spaces}

It is convenient to split the construction into a few parts. We may and do assume that $c>1$: when $c=1$, then the vector $\vec{w}$ consists of constant weights and the claim follows easily from the unweighted theory. Let $\alpha=\alpha(p,c)$ be the parameter given by \eqref{defalfa}.

\smallskip

Before we proceed, let us briefly describe the idea behind the construction. To this end, we go back to the proof of \eqref{Hardy} presented at the end of the previous section and the discretization argument used there. We distinguished the probability space $((0,T],|\cdot|/T)$ and the analysis depended heavily on the special intervals of the form $(0,t_j]$ and $(t_{j+1},t_j]$, $j=0,\,1,\,2,\,\ldots$. This suggests the approach which can be roughly formulated as follows. From the previous subsection, we know the extremal examples $\vec{f}$, $\vec{w}$ for the multilinear Hardy inequality on $(0,\infty)$, and they should remain remain extremal if we restrict ourselves to the bounded interval $(0,T]$, for some large $T$. Now, we pick small $\delta\in (0,1)$, consider the tree as in the proof of \eqref{Hardy} and consider the conditional expectations $\tilde{f}$, $\tilde{w}$ ($=\tilde{\sigma}^{1-\underline{p}}$) coming from \eqref{condexp}. It is natural to expect that these vectors should be extremal in the above special probability space. For general $(X,\mu)$, the idea is to find first appropriate analogues of the intervals $(0,t_j]$ and then transfer $\tilde{f}$, $\tilde{w}$ to this setting, using appropriate measure-preserving transformation.

\smallskip

Having described the idea, we turn to the rigorous analysis.

\smallskip

\emph{Step 1. Auxiliary geometrical facts and parameters.}
Pick $\tilde{\alpha}<\alpha$. Then the number $\tilde{c}$ given by
$$ \tilde{c}(\tilde{\alpha}+1)\prod_{j=1}^m (1+\tilde{\alpha}(1-p_j'))^{p/p_j'}=1$$
satisfies $\tilde{c}<c$.
Consider the points $\mathcal{P}, \mathcal{Q} \in \R^{m+1}$ given by
$$\mathcal{P} = \left(1,1, \ldots , 1\right) \qquad \mbox{and}\qquad \mathcal{Q} = \left(\frac{1}{1+\tilde{\alpha}(1-p_1')},\ldots,
\frac{1}{1+\tilde{\alpha}(1-p_m')}, \frac{1}{1+\tilde{\alpha}}\right).$$
Then the previous equality is equivalent to saying that $\mathcal{Q}$ lies on the curve $$\lbrace(s_1,s_2,\ldots,s_m,u)\,:\,us_1^{p/p_1'}s_2^{p/p_2'}\ldots s_m^{p/p_m'}=\tilde{c}\rbrace.$$
It is easy to check that the line segment $\mathcal{P}\mathcal{Q}$ is tangent to this curve.
Next, fix $\delta>0$ and let $\mathcal{R} \in \R^{m+1}$ be defined by
$$ \mathcal{R}= \left(\frac{\delta(1-\delta)^{\tilde{\alpha}(1-p_1')}}{1-(1-\delta)^{\tilde{\alpha}(1-p_1')+1}},\ldots,
\frac{\delta(1-\delta)^{\tilde{\alpha}(1-p_m')}}{1-(1-\delta)^{\tilde{\alpha}(1-p_m')+1}}, \frac{\delta(1-\delta)^{\tilde{\alpha}}}{1-(1-\delta)^{\tilde{\alpha}+1}}\right).$$
Clearly, we have $\mathcal{R}\to \mathcal{Q}$ as $\delta\to 0$. Consequently, for $\delta$ sufficiently close to $0$, the
%$$ \mathcal{R}_{m+1}\prod_{j=1}^m (\mathcal{R}_j)^{p/p_j'}\leq c.$$
%Moreover, one easily checks that the line passing through $\mathcal{P}$ and $\mathcal{Q}$ is tangent to the curve $\lbrace(s_1,s_2,\ldots,s_m,u)\,:\,us_1^{p/p_1'}s_2^{p/p_2'}\ldots s_m^{p/p_m'}=\tilde{c}\rbrace$, so for an appropriately small $\delta$, the
 line segment joining $\mathcal{P}$ and $\mathcal{R}$ lies below the curve $\{(s_1,s_2,\ldots,s_m,u)\,:\,us_1^{p/p_1'}s_2^{p/p_2'}\ldots s_m^{p/p_m'}=c\}$. See Figure \ref{PQR}.

 \begin{figure}[htbp]
\begin{center}
\includegraphics[scale=0.65]{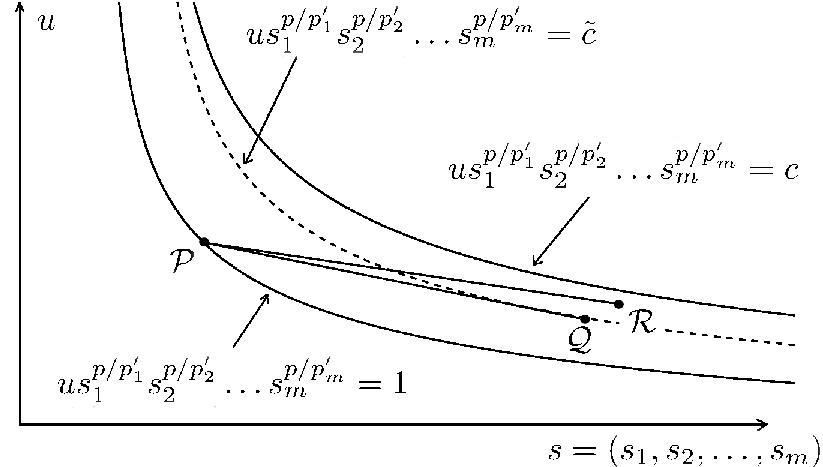}
\caption{The points $\mathcal{P}$, $\mathcal{Q}$, $\mathcal{R} \in \mathbb{R}^{m+1}$. The line segment $\mathcal{P}\mathcal{Q}$ is tangent to the curve $us_1^{p/p_1'}s_2^{p/p_2'}\ldots s_m^{p/p_m'}=\tilde{c}$ and the line segment $\mathcal{PR}$ lies entirely below the curve $us_1^{p/p_1'}s_2^{p/p_2'}\ldots s_m^{p/p_m'}=c$.}\label{PQR}
\end{center}
\end{figure}

\smallskip

\emph{Step 2. Construction.}
Now, recall the following simple measure-theoretic fact, which can be found in \cite{Me}.

\begin{lemma}\label{Melas}
For every $Q \in \mathcal{T}$ and every $\beta \in (0,1)$ there is a subfamily $F(Q) \subset \mathcal{T}$ consisting of pairwise
disjoint subsets of $Q$ such that
$$ \mu\left(\bigcup_{R\in F(Q)} R\right)=\sum_{R\in F(Q)} \mu(R)=\beta\mu(Q).$$
%The elements of $F(Q)$ are called the \emph{atoms} of $\bigcup_{R\in F(Q)}R$.
\end{lemma}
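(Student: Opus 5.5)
The plan is a greedy selection, generation by generation, using the nested structure of $\mathcal{T}$ together with condition (iv). Fix $Q\in\mathcal{T}^{n_0}$ and $\beta\in(0,1)$, and put $\theta=\beta\mu(Q)$. By (ii) and (iii), for each $k\geq 0$ the elements of $\mathcal{T}^{n_0+k}$ contained in $Q$ partition $Q$. Moreover, any two elements of $\mathcal{T}$ are either nested or disjoint. I will build finite families $F_0\subseteq F_1\subseteq F_2\subseteq\ldots$ of pairwise disjoint elements of $\mathcal{T}$ lying inside $Q$, and then set $F(Q)=\bigcup_k F_k$.

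\emph{Construction.} Start with $F_{-1}=\emptyset$. Given $F_{k-1}$, whose total measure is at most $\theta$, list the elements of $\mathcal{T}^{n_0+k}$ contained in $Q$ that are not contained in any member of $F_{k-1}$. By the nesting property, these are exactly the elements of generation $n_0+k$ that are disjoint from $\bigcup F_{k-1}$. Go through this finite list in any order and add an element to the family whenever doing so keeps the total measure $\leq\theta$. The result is $F_k$, which is again pairwise disjoint with $\sum_{R\in F_k}\mu(R)\leq\theta$. Let $d_k=\theta-\sum_{R\in F_k}\mu(R)\geq 0$ denote the remaining deficit.

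\emph{Key estimate.} I claim that $d_k\leq\sup_{S\in\mathcal{T}^{n_0+k}}\mu(S)$ for every $k$. If $d_k=0$ there is nothing to prove. Otherwise, the uncovered part $Q\setminus\bigcup F_k$ has measure $\mu(Q)-\theta+d_k>0$. Since generation $n_0+k$ partitions $Q$, and each of its elements is either contained in $\bigcup F_k$ or disjoint from it, this uncovered part contains some element $S$ of generation $n_0+k$. That $S$ was examined and rejected in the greedy step, so $\mu(S)>d_k$ at the moment of rejection. Hence $\mu(S)>d_k$ at the end as well, because the deficit only decreases during the step. This proves the claim. By condition (iv), the right-hand side tends to $0$, so $d_k\to 0$.

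\emph{Conclusion.} The family $F(Q)=\bigcup_k F_k$ is a countable collection of pairwise disjoint elements of $\mathcal{T}$ contained in $Q$. Its total measure is $\lim_k(\theta-d_k)=\theta=\beta\mu(Q)$. The measure of the union equals the sum by countable additivity.

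The only delicate point is the key estimate. It requires two facts: uncovered elements always exist, which is where $\beta<1$ is used, and the tree is nested, so that a rejected element of generation $n_0+k$ is still entirely available. Both follow directly from the definition of a tree.
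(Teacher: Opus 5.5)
Your proof is correct and complete. The partition and nesting structure of the generations, the finiteness of each $\mathcal{T}^n$, and the fact that (because $\beta<1$) some element of generation $n_0+k$ is always rejected together give $d_k<\sup_{S\in\mathcal{T}^{n_0+k}}\mu(S)$. This bound tends to $0$ by (iv), so the countable family $\bigcup_k F_k$ has total measure exactly $\beta\mu(Q)$.

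The paper does not prove this lemma itself; it only quotes it from \cite{Me}, so there is no in-paper argument to compare against. Your generation-by-generation greedy exhaustion is a natural self-contained proof. It also correctly allows $F(Q)$ to be infinite, which is unavoidable in general (for instance, the dyadic tree on $[0,1)$ with $\beta=1/3$). This causes no trouble in the paper's later construction of the sets $A_n$.
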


We use this fact recursively and construct an appropriate sequence $A_0\supset A_1\supset A_2\supset \ldots$ of subsets of $X$, which will play the role of the intervals $(0,t_0]$, $(0,t_1]$, $(0,t_2]$, $\ldots$ discussed above. The starting point is the choice $A_0=X$. To describe the inductive step, assume we have constructed $A_n$, which is a union of pairwise almost disjoint elements of $\mathcal{T}$, called the \emph{atoms} of $A_n$. Of course,  this condition is satisfied for $n=0$: we have $A_0=X\in \mathcal{T}$. Then, for each atom $Q$ of $A_n$, we apply the above lemma with $\beta=1-\delta$ and get the corresponding subfamily $F(Q)$. Put $A_{n+1}=\bigcup_Q\bigcup_{Q'\in  F(Q)}Q'$, the first union taken over all atoms $Q$ of $A_n$. Directly from the definition, this set is a union of the family $\{F(Q):Q\mbox{ an atom of }A_n\}$, which consists of pairwise disjoint elements of $\mathcal{T}$. We call these elements the atoms of $A_{n+1}$ and conclude the description of the induction step.

As an immediate consequence of the above construction, we see that if $Q$ is an atom of $A_k$, then for any $n\geq k$ we have
$$ \mu(Q\cap A_n)=\mu(Q)\left(1-\delta\right)^{n-k}$$
and hence
\begin{equation}\label{ufo}
 \mu(Q\cap (A_n\setminus A_{n+1}))=\mu(Q)\left(1-\delta\right)^{n-k}\delta.
 \end{equation}
 In particular, we deduce that for any $n=0,\,1,\,2,\,\ldots$,
 \begin{equation}\label{An}
 \mu(A_n\setminus A_{n+1})=(1-\delta)^n\delta.
 \end{equation}
Now, introduce the weights $w_1=w_2=\ldots=w_m$ on $X$ by the formula
$$ w_j=\sum_{n=0}^\infty \chi_{A_n\setminus A_{n+1}}(1-\delta)^{n\tilde{\alpha}}.$$
and for $j=1,\,2,\,\ldots,\,m$, we let
$$ f_j=\sum_{n=0}^\infty \chi_{A_n\setminus A_{n+1}}(1-r\delta)^{-n(\tilde{\alpha}+1)/p_j},$$
where $r \in (0,1)$ is a fixed parameter. It is instructive to compare the functions $w_j$ and $f_j$ just defined above to their ``continuous'' counterparts \eqref{weiHardy} and \eqref{funHardy}: we immediately see the similarity of these objects, at least when $\tilde{\alpha}$ is close to $\alpha$, $r$ is close to $1$ and $\delta$ is close to $0$.

\smallskip

\emph{Step 3. We have $[\vec{w}]_{A_{\vec{P}}}\leq c$.} Since $w_1=w_2=\ldots=w_m$, we have $v_{\vec{w}}=w_1$ and we need to show that for any $R\in \mathcal{T}$,
$$ \langle w_1\rangle_R \prod_{j=1}^m \langle w_1^{1-p_j'}\rangle_R^{p/p_j'}\leq c.$$
 To prove this, we use \eqref{ufo} to obtain that for each atom $Q$ of $A_k$ we have
\begin{equation}\label{atom0}
\begin{split}
\langle w_1\rangle_Q&=\frac{1}{\mu(Q)}\sum_{n\geq 0} \mu(Q\cap (A_n\setminus A_{n+1}))(1-\delta)^{n\tilde{\alpha}}\\
 &= \sum_{n \geq k} (1-\delta)^{n-k}\delta (1-\delta)^{n\tilde{\alpha}} = \frac{\delta (1-\delta)^{k\tilde{\alpha}}}{1-(1-\delta)^{\tilde{\alpha}+1}}
\end{split}
 \end{equation}
and
\begin{align*}
\langle w_1^{1-p_j'}\rangle_Q = \sum_{n\geq k}(1-\delta)^{n-k}\delta (1-\delta)^{n\tilde{\alpha}(1-p_j')} = \frac{\delta (1-\delta)^{k\tilde{\alpha}(1-p_j')}}{1-(1-\delta)^{\tilde{\alpha}(1-p_j')+1}}.
 \end{align*}
Now, suppose that $R$ is an arbitrary element of $\mathcal{T}$. Then there is an integer $k$ such that $R\subseteq A_{k-1}$ and $R\not\subseteq A_k$. We have
\begin{align*}
 \langle w_1\rangle_R&=\frac{1}{\mu(R)}\int_{R\setminus A_k}w_1\mbox{d}\mu+\frac{1}{\mu(R)}\int_{R\cap A_k}w_1\mbox{d}\mu\\
&=\frac{1}{\mu(R)}\int_{R\setminus A_k}(1-\delta)^{(k-1)\tilde{\alpha}}\mbox{d}\mu+\frac{1}{\mu(R)}\int_{R\cap A_k}w_1\mbox{d}\mu.
\end{align*}
By \eqref{atom0}, applied to each atom $Q$ of $A_k$ contained in $R$, we get
$$ \int_{R\cap A_k}w_1\mbox{d}\mu=\frac{\mu(R\cap A_k)\delta (1-\delta)^{k\tilde{\alpha}}}{1-(1-\delta)^{\tilde{\alpha}+1}}$$
and hence, setting $\eta:=\mu(R\cap A_k)/\mu(R)$, we rewrite the preceding equality as
$$  \langle w_1\rangle_R=(1-\eta)(1-\delta)^{(k-1)\tilde{\alpha}}+\eta\cdot \frac{\delta(1-\delta)^{k\tilde{\alpha}}}{1-(1-\delta)^{\tilde{\alpha}+1}}.$$
Similarly, we obtain
\begin{align*}
& \langle w_1^{1-p_j'}\rangle_R=(1-\eta)(1-\delta)^{(k-1)\tilde{\alpha}(1-p_j')}+\eta \cdot \frac{\delta (1-\delta)^{k\tilde{\alpha}(1-p_j')}}{1-(1-\delta)^{\tilde{\alpha}(1-p_j')+1}},
\end{align*}
which implies
\begin{align*}
& \langle w_1\rangle_R \prod_{j=1}^m \langle w_1^{1-p_j'}\rangle_R^{p/p_j'}\\
&=\bigg((1-\eta)+ \eta \cdot \frac{\delta(1-\delta)^{\tilde{\alpha}}}{1-(1-\delta)^{\tilde{\alpha}+1}}\bigg)\prod_{j=1}^m\bigg((1-\eta) + \eta\cdot \frac{\delta(1-\delta)^{\tilde{\alpha}(1-p_j')}}{1-(1-\delta)^{\tilde{\alpha}(1-p_j')+1}}\bigg)^{\frac{p}{p_j'}}.
\end{align*}
This number does not exceed $c$. Indeed, the right-hand side can be rewritten as
$$ \Big((1-\eta)\mathcal{P}_{m+1}+\eta\mathcal{R}_{m+1}\Big)\prod_{j=1}^m\Big((1-\eta)\mathcal{P}_{j} + \eta\mathcal{R}_{j}\Big)^{\frac{p}{p_j'}}$$
where $\mathcal{P}_n$ and $\mathcal{R}_n$ are the coordinates of the points $\mathcal{P}$ and $\mathcal{R}$. As $\eta$ ranges from $0$ to $1$, the point $(1-\eta)\mathcal{P} + \eta\mathcal{R}$ runs over the line segment $\mathcal{P}\mathcal{R}$ which is entirely contained in $\{(s_1,s_2,\ldots,s_m,u)\,:\,us_1^{p/p_1'}s_2^{p/p_2'}\ldots s_m^{p/p_m'}\leq c\}$ (see Step 1). Since $R$ was arbitrary, we obtain the desired $A_{\vec{P}}$ condition: $[\vec{w}]_{A_{\vec{P}}}\leq c$.

\smallskip

\emph{Step 4. Completion of the proof.} In the same manner as above, one verifies that if $Q$ is an atom of $A_k$, then
\begin{align*}
\langle f_j\rangle_Q =\sum_{n\geq k} (1-\delta)^{n-k}\delta(1-r\delta)^{-n(\tilde{\alpha}+1)/p_j}   = \frac{\delta ( 1-r\delta)^{-k(\tilde{\alpha}+1)/p_j}}{1-(1-\delta)(1-r\delta)^{-(\tilde{\alpha}+1)/p_j}}.
\end{align*}
Note that the ratio of the above geometric series is equal to
$$ (1-\delta)(1-r\delta)^{-(\tilde{\alpha}+1)/p_j} = 1 +\delta\left(\frac{r(\tilde{\alpha}+1)}{p_j} - 1\right) + o(\delta) < 1$$
for sufficiently small $\delta$, since $\tilde{\alpha} < \alpha \leq p_j-1$, so the series is convergent. Hence, on $A_k$ (and hence also on $A_k\setminus A_{k-1}$) we have
$$\mathcal{M}\vec{f}\geq \prod_{j=1}^m \langle f_j\rangle_Q=\prod_{j=1}^m \frac{\delta ( 1-r\delta)^{-k(\tilde{\alpha}+1)/p_j}}{1-(1-\delta)(1-r\delta)^{-(\tilde{\alpha}+1)/p_j}}$$
and therefore, using \eqref{An}, we get
\begin{align*}
\|\mathcal{M}\vec{f}\|_{L^p(v_{\vec{w}})}&\geq \left(\sum_{n\geq 0}\left(\prod_{j=1}^m \frac{\delta ( 1-r\delta)^{-n(\tilde{\alpha}+1)/p_j}}{1-(1-\delta)(1-r\delta)^{-(\tilde{\alpha}+1)/p_j}}\right)^p (1-\delta)^n\delta(1-\delta)^{n\tilde{\alpha}}\right)^{1/p}.
\end{align*}
The ratio of the geometric series in the parentheses is given by
\begin{align*}
 \prod_{j=1}^m (1-r\delta)^{-(\tilde{\alpha}+1)p/p_j}\cdot (1-\delta)^{\tilde{\alpha}+1} &= (1-r\delta)^{-(\tilde{\alpha}+1)}(1-\delta)^{\tilde{\alpha}+1}\\
&=1+\delta\left(r(\tilde{\alpha}+1)-(\tilde{\alpha}+1)\right) + o(\delta)\\
&=1+\delta(\tilde{\alpha}+1)(r-1) + o(\delta) < 1
\end{align*}
as $\delta\to 0$. Thus, the series is convergent, and we obtain
\begin{align*}
 \|\mathcal{M}\vec{f}\|_{L^p(v_{\vec{w}})}&\geq \prod_{j=1}^m \frac{\delta }{1-(1-\delta)(1-r\delta)^{-(\tilde{\alpha}+1)/p_j}}\cdot\left(\frac{\delta}{1- (1-\delta)^{\tilde{\alpha}+1}(1-r\delta)^{-(\tilde{\alpha}+1)}}\right)^{\frac{1}{p}}.
\end{align*}
On the other hand, an analogous computation, again based on the equality \eqref{An}, shows that for $j\geq 1$,
\begin{align*}
\|f_j\|_{L^{p_j}(w_j)}&=\left(\sum_{n\geq 0}(1-r\delta)^{-n(\tilde{\alpha}+1)}(1-\delta)^n\delta(1-\delta)^{n\tilde{\alpha}} \right)^{\frac{1}{p_j}}\\
&= \left(\frac{\delta}{1- (1-\delta)^{\tilde{\alpha}+1}(1-r\delta)^{-(\tilde{\alpha}+1)}}\right)^{\frac{1}{p_j}}.
\end{align*}
Putting the above facts together, we get
$$ \limsup_{\delta \to 0}\frac{\|\mathcal{M}\vec{f}\|_{L^p(v_{\vec{w}})}}{\prod_{j=1}^m \|f_j\|_{L^{p_j}(w_j)}}\geq \prod_{j=1}^m \frac{\delta}{1-(1-\delta)(1-r\delta)^{-(\tilde{\alpha}+1)/p_j}}.$$
Finally, we check that with $\delta$ approaching $0$, the constant on the right converges to
$$ \prod_{j=1}^m \frac{1}{1- \frac{r(\tilde{\alpha}+1)}{p_j}} = \prod_{j=1}^m \frac{p_j}{p_j- r(\tilde{\alpha}+1)}.$$
As $r \to 1, \tilde{\alpha} \to \alpha$, this yields the desired lower bound for the optimal constant.
\medskip

\section{On the search of the Bellman function}

As we have seen above, the key ingredient of the proof of our main estimate \eqref{mainin} lies in the discovery of the best constant in the testing condition \eqref{testing}. The purpose of this section is to sketch an informal reasoning which has led us to this constant and the formula for the special function associated with this estimate.

Let $c\geq 1$ be a given number, $m$ be a positive integer and $\vec{P}=(p_1,p_2,\ldots,p_m)$ be a fixed vector of exponents satisfying $p^{-1}=p_1^{-1}+p_2^{-1}+\ldots+p_m^{-1}$ and $p_j>1$ for each $j$. Suppose that  the underlying probability space $(X, \mu)$ is the interval $[0,1]$ with Lebesgue's measure, and equip it with the tree of dyadic subintervals. Assume further that we are interested in the least constant $\kappa=\kappa_{\vec{P},c}$ such that whenever $R$ is a dyadic subinterval of $[0,1]$ and $\vec{w}$ is a multilinear weight on $[0,1]$ satisfying $[\vec{w}]_{A_{\vec{P}}}=c$, then
\begin{equation}\label{testing2}
\int_R \mathcal{M}(\vec{\sigma}\chi_R)^pv_{\vec{w}}\mbox{d}\mu\leq \kappa^p\int_R \prod_{j=1}^m \sigma_j^{p/p_j}\mbox{d}\mu.
\end{equation}
We start with the description of the general approach.
The abstract Bellman function $\mathbb B$ associated with the above estimate is given by
$$ \mathbb{B}(s;t,u)=\sup\left\{\frac{1}{\mu(R)}\int_R \bigg(\max\left\{\mathcal{M}(\vec{\sigma}\chi_R),t\right\}^pv_{\vec{w}}- \kappa^p \prod_{j=1}^m \sigma_j^{p/p_j}\bigg)\mbox{d}\mu\right\},$$
where the supremum is taken over all $A_p$ weights $\vec{w}$ on $R$ satisfying $\langle \vec{\sigma}\rangle_R=s$, $\langle v_{\vec{w}}\rangle_R=u$  and $[\vec{w}]_{A_{\vec{P}}}\leq c$. At the first glance, the function $\mathbb B$ depends also on the interval $R$, however, this is not the case.  Indeed, for any two dyadic intervals $R_1$ and $R_2$, an affine mapping of one interval onto another puts the $A_{\vec{P}}$ weights in one-to-one correspondence, and such a change of the variable preserves the averages and the $A_{\vec{P}}$ characteristics. On the other hand, considering different domains $R$ is crucial for the understanding of the properties of $\mathbb B$. Another important remark concerns the domain $\mathcal{D}_{\vec{P},c}$ of $\mathbb{B}$: it is equal to
$$ \mathcal{D}_{\vec{P},c}=\Big\{(s;t,u)\in (0,\infty)^m\times (0,\infty)\times (0,\infty)\,:  s^{-p/\underline{p}'}\leq u\leq cs^{-p/\underline{p}'}\Big\}.$$
Indeed, if $u$, $s$ satisfy $u< s^{-p/\underline{p}'}$, then the weight $\vec{w}$ with the above properties does not exist, by the H\"older inequality. Furthermore, if $u> cs^{-p/\underline{p}'}$, then Muckenhoupt's condition is violated at the interval $R$.

Let us study the structure of $\mathbb{B}$. By the validity of \eqref{testing2} (with some a priori unknown constant $\kappa_{\vec{P},c}$), it follows that
\begin{equation}\label{init0}
\mathbb{B}(s;s_1s_2\ldots s_m,u)\leq 0.
\end{equation}
Indeed: we have $\mathcal{M}(\vec{\sigma}\chi_R)\geq \langle \sigma_1\rangle_R\langle \sigma_2\rangle_R\ldots \langle \sigma_m\rangle_R=s_1s_2\ldots s_m$ on $R$, so for any weight $\vec{w}$ as in the definition of $\mathbb{B}(s;s_1s_2\ldots s_m,u)$ we have
$$ \int_R \bigg(\max\left\{\mathcal{M}(\vec{\sigma}\chi_R),s_1s_2\ldots s_m\right\}^pv_{\vec{w}}- \kappa^p \prod_{j=1}^m \sigma_j^{p/p_j}\bigg)\mbox{d}\mu\leq 0.$$
Taking the supremum over all $\vec{w}$, we obtain the estimate \eqref{init0}. The pointwise bound $\mathcal{M}(\vec{\sigma}\chi_R)\geq s_1s_2\ldots s_m$ yields another property of $\mathbb{B}$: we have $\max\left\{\mathcal{M}(\vec{\sigma}\chi_R),t\right\}=\max\left\{\mathcal{M}(\vec{\sigma}\chi_R),s_1s_2\ldots s_m,t\right\}$ and hence
\begin{equation}\label{maxxB}
\mathbb{B}(s;t,u)=\mathbb{B}\big(s;\max\{t,s_1s_2\ldots s_m\},u\big).
\end{equation}
Next, note that  $ \max\left\{\mathcal{M}(\vec{\sigma}\chi_R),t\right\}\geq t$ and
$$ \frac{1}{\mu(R)}\int_R \prod_{j=1}^m \sigma_j^{p/p_j}\mbox{d}\mu\leq \prod_{j=1}^m \langle \sigma_j\rangle_R^{p/p_j}=s^{p/\underline p},$$
and hence we have the majorization
\begin{equation}\label{majj0}
 \mathbb{B}(s;t,u)\geq t^pu-\kappa^ps^{p/\underline{p}}\qquad \mbox{for all }(s;t,u)\in \mathcal{D}_{\vec{P},c}.
\end{equation}
Note that if $u=s^{-p/\underline{p}'}$, then the equality holds: there is only one, constant weight satisfying the prescribed integral requirements.
We move to another property of $\mathbb B$, which is expressed as the following concavity-type condition. Pick $(s;t,u)\in \mathcal{D}_{\vec{P},c}$ such that $t\geq s_1s_2\ldots s_m$, and another two points $(s_-;u_-)$, $(s_+;u_+)\in (0,\infty)^m\times (0,\infty)$ such that
$$ s_\pm ^{-p/\underline{p}'}\leq u_\pm \leq cs_\pm ^{-p/\underline{p}'}.$$
Then we have
\begin{equation}\label{conv00}
\mathbb{B}(s;t,u)\geq \frac{1}{2}\big(\mathbb{B}(s_-;t,u_-)+\mathbb{B}(s_+;t,u_+)\big).
\end{equation}
To see this, split $R$ into its left half $R_-$ and its right half $R_+$, and then pick an $m$-linear weight $\vec{w}_-$ on $R_-$ and an $m$-linear weight $\vec{w}_+$ on $R_+$ satisfying the properties as in the definitions of $\mathbb{B}(s_-;t,u_-)$ and $\mathbb{B}(s_+;t,u_+)$, respectively. Let us splice these two weights into one $m$-linear $w$ on $R$, given by $\vec{w}=\vec{w}_-\chi_{R_-}+\vec{w}_+\chi_{R_+}$. Then we have $[\vec{w}]_{A_{\vec{P}}}\leq c$, $\langle \vec{\sigma}\rangle_R=(\langle \vec{\sigma}_-\rangle_{R_-}+\langle \vec{\sigma}_+\rangle_{R_+})/2=(s_-+s_+)/2$ and similarly $\langle v_{\vec{w}}\rangle_R=(\langle v_{\vec{w}_-}\rangle_{R_-}+\langle v_{\vec{w}_+}\rangle_{R_+})/2=(u_-+u_+)/2$ . Furthermore, since $t\geq s_1s_2\ldots s_m$, we have the identity
$$\max\left\{\mathcal{M}(\vec{\sigma}\chi_R),t\right\}=\begin{cases}
\max\left\{\mathcal{M}(\vec{\sigma_-}\chi_{R_-}),t\right\} & \mbox{on }R_-,\\
\max\left\{\mathcal{M}(\vec{\sigma_+}\chi_{R_+}),t\right\} & \mbox{on }R_+.
\end{cases}$$
 Therefore, by the very definition of $\mathbb{B}$, we may write
\begin{align*}
\mathbb{B}(s;t,u)&\geq \frac{1}{\mu(R)}\int_R \bigg(\max\left\{\mathcal{M}(\vec{\sigma}\chi_R),t\right\}^pv_{\vec{w}}- \kappa^p \prod_{j=1}^m \sigma_j^{p/p_j}\bigg)\mbox{d}\mu\\
&=\frac{1}{2}\Bigg[\frac{1}{\mu(R_-)}\int_{R_-} \bigg(\max\left\{\mathcal{M}(\vec{\sigma_-}\chi_{R_-}),t\right\}^pv_{\vec{w_-}}- \kappa^p \prod_{j=1}^m \sigma_{-j}^{p/p_j}\bigg)\mbox{d}\mu\\
&\qquad +\frac{1}{\mu(R_+)}\int_{R_+} \bigg(\max\left\{\mathcal{M}(\vec{\sigma_-}\chi_{R_+}),t\right\}^pv_{\vec{w_+}}- \kappa^p \prod_{j=1}^m \sigma_{+j}^{p/p_j}\bigg)\mbox{d}\mu\Bigg].
\end{align*}
Taking the supremum over $\vec{w}_\pm$ on the right, we obtain \eqref{conv00}.

Finally, we mention that $\mathbb B$ satisfies a number of homogeneity conditions. Note that if $\vec{w}$ is an $A_{\vec{P}}$-weight and multiply its $j$-th coordinate (for some fixed $j$) by a parameter $\lambda>0$, then the obtained weight $\vec{w}^j=(w_1,w_2,\ldots,w_{j-1},\lambda w_j,w_{j+1},\ldots,w_m)$ satisfies $[\vec{w}^j]_{A_{\vec{P}}}=[\vec{w}]_{A_{\vec{P}}}$ and the integral averages dilate appropriately. Thus by the very definition of  $\mathbb B$,
\begin{equation}\label{homog}
 \mathbb B(s_1,s_2,\ldots,s_{j-1},\lambda s_j,s_{j+1},\ldots,s_m; \lambda t, \lambda^{-p/p_j'}u)=\lambda^{p/p_j}\mathbb{B}(s;t,u).
\end{equation}
The important observation in the reverse direction is the following. Namely, if we take some constant $\kappa_{\vec{P},c}$ and manage to construct \emph{some} function $B$ which satisfies the conditions \eqref{init0}, \eqref{maxxB}, \eqref{majj0} and \eqref{conv00}, then the desired estimate \eqref{testing2} holds true (with the constant $\kappa_{\vec{P},c}$ we have just picked). This can be easily shown by the same reasoning as in Section 3: the conditions \eqref{maxxB} and \eqref{conv00} imply the monotonicity of the sequence $\left(\int_R {B}(x_n;y_n,z_n)\mbox{d}\mu\right)_n$. Furthermore, if convenient, we need not restrict ourselves to $\mathcal{D}_{\vec{P},c}$ and may work with functions $B$ which are defined on larger domains. We would like to emphasize that the homogeneity property is not needed for the reverse implication, but it plays a key role during the search.

So, let us suppose that we search for a function $B:(0,\infty)^m\times (0,\infty)\times (0,\infty)\to \R$ such that the appropriate four conditions hold (in the literature, such a sufficient function $B$ is called a supersolution). Let us also assume that $B$ enjoys the above homogeneity \eqref{homog}. This search will involve a number of guesses and a lot of experimentation. As the base, we can inspect the papers \cite{Os} and \cite{Oss}, in which the related simpler cases (the linear and special multilinear context $p_1=p_2=\ldots=p_m$) have been handled successfully. The general idea is to take a look at the conditions \eqref{init0}, \eqref{maxxB}, \eqref{majj0} and \eqref{conv00}, and \emph{assume} that the inequalities appearing there actually become equalities at some particular instances: these equalities, in turn, lead to the appropriate Bellman function.

\smallskip

For the clarity of the presentation, we have decided to split the further search into several separate steps. We would like to emphasize once again that the reasoning will be very informal at some steps and we will make several guesses about the Bellman function. Furthermore, we will not worry about the regularity of this object and study its derivatives without checking that they actually exist. This is permitted: the primary goal of this section is to provide the idea behind the construction of the Bellman function, the rigorous analysis has been already carried out above.

\smallskip

\emph{Step 1. A special form of the Bellman function.} We start with the concavity property \eqref{conv00}, which in particular implies that an appropriate Hessian matrix (in the variables $s$ and $u$) is seminegative-definite. The degeneration of this condition forces the determinant of the Hessian to vanish, which leads to the corresponding second-order differential equation of Monge--Amp\'ere type. This equation is extremely difficult to solve, and hence other approach is needed. Instead, we {guess} the general ``shape'' of the formula for $B$. In the linear setting \cite{Os}, it turns out that one may take a function $B$ which, for a fixed $t$, is \emph{linear} in $s$ and $u$. In \cite{Oss}, in the special case $p_1=p_2=\ldots=p_m$, an ``almost linear'' function $B$ was considered: for a fixed $t$, it was linear in $u$ and depended on $s$ through $(s_1s_2\ldots s_m)^{1/m}$ (which, in a sense, is very close to linear). In the general context studied in this paper, the exponents $p_1$, $p_2$, $\ldots$, $p_m$ are different, and it is not clear what the analogue of $(s_1s_2\ldots s_m)^{1/m}$ is; actually, it is not even clear if any modification of this expression would work. The key idea, which pushed our analysis forward, is to try $B$ defined as the infimum of some functions which are linear in $s$ and $u$. That is, suppose that for each $t$ we have a family $\Gamma(t)$ which consists of some linear functions $b:[0,\infty)^{m+1}\to \R$ and let
\begin{equation}\label{Bpar}
 B(s;t,u)=\inf\Big\{ b(s;u)\,:b\in \Gamma(t)\Big\}.
\end{equation}
Clearly, for such functions the concavity condition \eqref{conv00} is given for free. Note that for this choice of $B$, the majorization \eqref{majj0} is equivalent to saying that
\begin{equation}\label{majbb}
 b(s;u)\geq t^pu-\kappa^p s^{p/\underline{p}}
\end{equation}
for all $(s;t,u)\in \mathcal{D}_{\vec{P},c}$ and all $b\in \Gamma(t)$.
It remains to guess the family $\Gamma(t)$ and the formula for the linear functions. To this end, fix $\mathbf{b}\in \Gamma(\kappa)$. We will show that by homogeneity, $\mathbf{b}$ induces a whole family of linear functions. Namely, fix an arbitrary $t>0$ and let $s^*=(s^*_1,s^*_2,\ldots,s^*_m)$ be a sequence of positive numbers with $\kappa s_1^*s_2^*\ldots s_m^*=t$. This sequence gives rise to the function
\begin{equation}\label{b*}
 \tilde{b}^{(s^*)}(s;t,u)=(s^*)^{p/\underline{p}}\,\mathbf{b}\left(\frac{s}{s^*};
u(s^*)^{p/\underline{p}'}\right),
\end{equation}
where for brevity, we have used the notation $s/s^*=(s_1/s_1^*,s_2/s_2^*,\ldots, s_m/s_m^*)$.
By \eqref{majbb}, $\tilde{b}^{(s^*)}$ satisfies
\begin{align*}
 \tilde{b}^{(s^*)}(s;t,u)&=(s^*)^{p/\underline{p}}\,\mathbf{b}\left(\frac{s}{s^*};u(s^*)^{p/\underline{p}'}\right)\\
 &\geq (s^*)^{p/\underline{p}}\left(\frac{t^p}{(s_1^*s_2^*\ldots s_m^*)^p}\frac{u}{(s^*)^{-p/\underline{p}'}}-\kappa^p\frac{s^{p/\underline{p}}}{(s^*)^{p/\underline{p}}}\right)=t^pu-\kappa^ps^{p/\underline{p}}.
 \end{align*}
Furthermore, $\tilde{b}^{(s^*)}(\cdot;t,\cdot)$ is obviously linear in $s$ and $u$. This strongly suggests to take
$$ \Gamma(t)=\Big\{\tilde{b}^{(s^*)}(\cdot;t,\cdot)\,:\,\kappa s_1^*s_2^*\ldots s_m^*=t\Big\},$$
and hence, from now on, we search for the special function of the form
\begin{equation}\label{defBB}
 B(s;t,u)=\inf\left\{(s^*)^{p/\underline{p}}\,\mathbf{b}\left(\frac{s}{s^*};
u(s^*)^{p/\underline{p}'}\right)\,:\,\kappa s_1^*s_2^*\ldots s_m^*=t\right\}.
\end{equation}
Note that the `mother'' function $\mathbf{b}$ inducing the function $B$ is not unique: we could as well replace it with $\tilde{b}^{(s^*)}(\cdot;\kappa,\cdot)$ for some arbitrary sequence $s^*$ satisfying $s_1^*s_2^*\ldots s_m^*=1$, and we would obtain the same $B$. This observation will allow us to use some homogeneity-type arguments later on. In the same spirit, note that the function $B$ enjoys the homogeneity property \eqref{homog}.

Before we proceed, let us emphasize that the above class of functions is very restrictive and a priori it is absolutely not clear if it contains an element enjoying the requirements \eqref{init0} -- \eqref{conv00}. Nevertheless, the above assumptions seem reasonable and even if the search was unsuccessful, the analysis would bring some valuable information on the structure of the Bellman function.

We write $\mathbf{b}(s;u)=K\cdot s+Lu+Q$; thus, the problem boils down to the identification of the coefficients $K$, $L$, $Q$ and the constant $\kappa$.

\smallskip

\emph{Step 2. On the coefficients $K$ and $Q$.}
 Let us exploit the majorization condition \eqref{majj0}: it implies $B(s;\kappa,s^{-p/\underline{p}'})\geq \kappa^ps^{-p/\underline{p}'}-\kappa^ps^{p/\underline{p}}$ for all $s$ and hence $\mathbf{b}$ must satisfy
\begin{equation}\label{key_maj}
\mathbf{b}(s;s^{-p/\underline{p}'})\geq \kappa^ps^{-p/\underline{p}'}-\kappa^ps^{p/\underline{p}}
\end{equation}
(simply take $s_1^*=s_2^*=\ldots=s_m^*=1$ in the definition of $B(s;\kappa,s^{-p/\underline{p}'})$).
However, as we discussed above, for points of the form $(s;\kappa,s^{-p/\underline{p}'})$ we have equality in \eqref{majj0}; thus it seems plausible to impose equality in \eqref{key_maj} for some special choice of $s$.
We {assume} that this occurs for some (single) $s$ with $s_1s_2\ldots s_m=1$. Replacing $\mathbf{b}$ with $\tilde{b}^{(s^*)}$ (see the discussion below \eqref{defBB}) if necessary, we may and do assume that this special $s$ equal to $\textbf{1}=(1,1,\ldots,1)$; here we use the fact that the right-hand side of \eqref{key_maj} enjoys an appropriate homogeneity. Thus $K\cdot\textbf{1}+L+Q=0$;   % Then we have
%$$0\leq B(\textbf{1};\kappa,1)\leq \mathbf{b}(\textbf{1};1)=0,$$
%where the first passage is due to \eqref{majj0}. Thus $B(\textbf{1};\kappa,1)= \mathbf{b}(\textbf{1};1)$ and since $B(\cdot,\kappa,\cdot)\leq \mathbf{b}(\cdot,\cdot)$, this in particular implies that the partial derivatives of these functions (with respect to the variables $s$ and $u$) at the point $(\textbf{1};1)$ must match.
furthermore,
%$$ \mathbf b_u(\textbf{1};1)=B_u(\textbf{1};\kappa,1)=L$$
differentiating both sides of \eqref{key_maj} with respect to $s_j$ at the point $(\textbf{1};1)$ yields
$$ \mathbf b_{s_j}(\textbf{1};1)-\frac{p}{p_j'}\mathbf b_u(\textbf{1};1)=-p\kappa^p,$$
or
$$ K_j=\mathbf{b}_{s_j}(\textbf{1};1)=p\left(\frac{L}{p_j'}-\kappa^p\right).$$
Putting all the above facts together, we get
\begin{equation}\label{coming_back}
\begin{split}
 \mathbf{b}(s;u)&=p\sum_{j=1}^m \left(\frac{L}{p_j'}-\kappa^p\right)(s_j-1)+L(u-1)\\
 &=p\sum_{j=1}^m \left(\frac{L}{p_j'}-\kappa^p\right)s_j+Lu+mp(\kappa^p-L).
\end{split}
\end{equation}
Note that for each $j$, the coefficient $K_j=p(L/p_j'-\kappa^p)$ must be nonnegative, since otherwise the inequality \eqref{key_maj} would be violated. Indeed, if we let $s_k=1$ for all $k\neq j$, the inequality is equivalent to
$$ p\left(\frac{L}{p_j'}-\kappa^p\right)s_j+(L-\kappa^p)s_j^{-p/p_j'}+\kappa^p s_j^{p/p_j}+p\kappa^p-pL\left(\frac{1}{p_j'}+\frac1p\right)\geq 0$$
and if we send $s_j$ to $\infty$, we see that the first term is the leading one. Thus $L/p_j'-\kappa^p\geq 0$ for all $j$, in particular we also have
\begin{equation}\label{Lin}
L-\kappa^p\geq 0.
\end{equation}

\smallskip

\emph{Step 3. On the parameter $L$ and the constant $\kappa$.}
To find $L$, we turn our attention to the property \eqref{init0}. In particular, this condition implies that we must have
$$ B(s;\kappa,cs^{-p/\underline{p}'})\leq 0$$
for any $s$ such that $s_1s_2\ldots s_m=\kappa$. By \eqref{defBB}, this is equivalent to saying that
\begin{equation}\label{sayingt}
 \inf\left\{(s^*)^{p/\underline{p}}\,\mathbf{b}\left(\frac{s}{s^*},c\left(\frac{s}{s^*}\right)^{-p/\underline{p}'}\right)\,:\, s_1^*s_2^*\ldots s_m^*=1\right\}\leq 0
 \end{equation}
for any $s$ as previously. We will show that this implies
\begin{equation}\label{alfain} \gamma=\inf\left\{ \mathbf{b}(s;cs^{-p/\underline{p}'})\,:\,s_1s_2\ldots s_m=\kappa\right\}\leq 0.
\end{equation}
To this end, suppose conversely that $\gamma>0$. Directly from \eqref{sayingt}, for any $\e>0$ there is $s^*$ with $s_1^*s_2^*\ldots s_m^*=1$ such that
$$(s^*)^{p/\underline{p}}\,\gamma\leq (s^*)^{p/\underline{p}}\,\mathbf{b}\left(\frac{s}{s^*},c\left(\frac{s}{s^*}\right)^{-p/\underline{p}'}\right)\leq \e.$$
Consequently,
\begin{align*}
 \e&\geq (s^*)^{p/\underline{p}}\,\mathbf{b}\left(\frac{s}{s^*},c\left(\frac{s}{s^*}\right)^{-p/\underline{p}'}\right)\\
 &=(s^*)^{p/\underline{p}}\left[p\sum_{j=1}^m \left(\frac{L}{p_j'}-\kappa^p\right)\frac{s_j}{s_j^*}+mp(\kappa^p-L)\right]+Lcs^{-p/\underline{p}'}\\
 &\geq mp(\kappa^p-L)(s^*)^{p/\underline{p}}+Lcs^{-p/\underline{p}'}\\
&\geq mp(\kappa^p-L)\e\gamma^{-1}+Lcs^{-p/\underline{p}'},
\end{align*}
where in the last passage we have exploited \eqref{Lin}. Note that $Lcs^{-p/\underline{p}'}>0$; thus letting $\e\to 0$, we obtain a contradiction. Thus \eqref{alfain} holds, equivalently,
\begin{equation}\label{infim}
 \inf \left\{p\sum_{j=1}^m \left(\frac{L}{p_j'}-\kappa^p\right)s_j+Lcs^{-p/\underline{p}'}+mp(\kappa^p-L):s_1s_2\ldots s_m=\kappa\right\}\leq 0.
 \end{equation}
Let us find $L$ and $\kappa$ so that we have \emph{equality} here. To this end, we make the convenient substitution $\tilde{L}=L/\kappa^p$ and apply the Young inequality, obtaining
\begin{align*}
p\sum_{j=1}^m \left(\frac{\tilde L}{p_j'}-1\right)s_j+\tilde L cs^{-p/\underline{p}'}
&=\sum_{j=1}^m \frac{p\left(\frac{\tilde L}{p_j'}-1\right)s_jmp_j'}{mp_j'} +\frac{\tilde Lc s^{-p/\underline{p}'} mp}{mp}\\
&\geq mp (\tilde Lc)^{1/mp}\prod_{j=1}^m (\tilde L-p_j')^{1/mp_j'}.
\end{align*}
Hence, if we assume that $\tilde L>1$ satisfies the equation
\begin{equation}\label{defLL}
 (\tilde Lc)^{1/mp}\prod_{j=1}^m (\tilde L-p_j')^{1/mp_j'}=\tilde L-1,
\end{equation}
then reverse to \eqref{infim} holds. Equality occurs for $s$ satisfying
$$ \left(\tilde L-p_j'\right)s_j=\tilde{L}cs^{-p/\underline{p}'}, \qquad j=1,\,2,\,\ldots,\,m.$$
To solve this in $s_j$, we observe that the substitution $s_j=(\tilde L-1)/(\tilde L-p_j')$ transforms the above equality into \eqref{defLL}. To make sure that equality holds in \eqref{infim}, we must assume that the sequence $s$ satisfies $s_1s_2\ldots s_m=\kappa$. This leads to the choice of the optimal constant:
$$ \kappa=\prod_{j=1}^m \frac{\tilde L-1}{\tilde L-p_j'},$$
where $\tilde L$ is given by \eqref{defLL}. Thus we have found all the parameters and thus obtain the candidate for the Bellman function. This is precisely the function we studied in Section 2 above.

\section*{Artificial Intelligence Statement}
No artificial intelligence tools were used to obtain the above result or for editorial purposes.

\end{document}